\newtheorem{thm}{Theorem}%[section]
\newtheorem{cor}[thm]{Corollary}
\theoremstyle{definition}
\newtheorem{defn}[thm]{Definition}
\newtheorem*{rmk}{Remark}
\newtheorem*{ex}{Example}
\DeclareMathOperator\toht{(31\mhyphen 2)}
\DeclareMathOperator\thto{(2\mhyphen 31)}
\DeclareMathOperator\thot{(2\mhyphen 13)}
\newcommand\set[1]{\left\{#1\right\}}
\newcommand\floor[1]{\left\lfloor#1\right\rfloor}
\def\Z{\mathbb{Z}}
\def\N{\mathbb{N}}
\def\S{\mathfrak{S}}
\def\D{\mathfrak{D}}
\def\A{\mathfrak{A}}
\def\M{\mathfrak{M}}
\def\H{\mathfrak{H}}
\def\I{\mathfrak{I}}
\DeclareMathOperator\EB{EB}
\DeclareMathOperator\NE{NE}
\DeclareMathOperator\ER{ER}
\DeclareMathOperator\exc{exc}
\DeclareMathOperator\cyc{cyc}
\DeclareMathOperator\wex{wex}
\DeclareMathOperator\defi{drop}
\DeclareMathOperator\des{des}
\DeclareMathOperator\dd{dd}
\DeclareMathOperator\da{da}
\DeclareMathOperator\peak{peak}
\DeclareMathOperator\valley{valley}
\DeclareMathOperator\fmax{fmax}
\DeclareMathOperator\cros{cros}
\DeclareMathOperator\nest{nest}
\DeclareMathOperator\stat{stat}
\DeclareMathOperator\cpeak{cpeak}
\DeclareMathOperator\cvalley{cvalley}
\DeclareMathOperator\cdd{cdd}
\DeclareMathOperator\cda{cda}
\DeclareMathOperator\fix{fix}
\mathchardef\mhyphen="2D
\DeclareMathOperator\les{(31\mhyphen 2)}
\DeclareMathOperator\less{(13\mhyphen 2)}
\DeclareMathOperator\res{(2\mhyphen 13)}
\DeclareMathOperator\ress{(2\mhyphen 31)}
\DeclareMathOperator\LES{31\mhyphen 2}
\DeclareMathOperator\RES{2\mhyphen 13}
\DeclareMathOperator\RESS{2\mhyphen 31}
\def\312{\les}
\def\132{\less}
\def\213{\res}
\def\231{\ress}
\title[Certain expansion of Eulerian polynomials via continued fractions] {The symmetric and unimodal expansion of Eulerian polynomials via continued fractions}
\author{Heesung Shin}
\address[Heesung Shin]{Department of Mathematics, Inha University;
 253 Yonhyun-dong, Nam-gu, Incheon, 402-751, South Korea}
\email{shin@inha.ac.kr}
\author{Jiang Zeng}
\address[Jiang Zeng]{Universit\'{e} de Lyon; Universit\'{e} Lyon 1; Institut Camille Jordan; UMR 5208 du CNRS; 43, boulevard du 11 novembre 1918, F-69622 Villeurbanne Cedex, France}
\email{zeng@math.univ-lyon1.fr}
\date{\today}
\begin{document}
\maketitle

\begin{abstract}
This paper was motivated by a conjecture of Br\"{a}nd\'{e}n (European J. Combin. \textbf{29} (2008), no.~2, 514--531)
about the divisibility of the coefficients in an expansion of generalized Eulerian polynomials, which
 implies the symmetric and unimodal property of the Eulerian numbers.
We show that such a formula with the   conjectured property
can be derived  from the
combinatorial theory of continued fractions.  We also discuss an analogous expansion for the corresponding formula for derangements
and prove a $(p,q)$-analogue of the fact that  the (-1)-evaluation of the enumerator polynomials of permutations (resp. derangements) by the number of excedances gives rise to tangent numbers (resp. secant numbers).  The $(p,q)$-analogue unifies and generalizes
our recent results (European J. Combin. \textbf{31} (2010), no.~7, 1689--1705.) and that of Josuat-Verg\`es
(European J. Combin. \textbf{31} (2010), no.~7, 1892--1906).
\end{abstract}

%\tableofcontents

\section{Introduction}
The Eulerian polynomials $A_{n}(t)$ can be defined by
\begin{align*}
\sum_{n=0}^{\infty}A_{n}(t)\frac{x^{n}}{n!}=\frac{1-t}{e^{(t-1)x}-t}.
\end{align*}
Let $\S_n$ be the set of permutations  on $[n]=\set{1,\dots,n}$. For $\sigma=\sigma(1)\dots\sigma(n)\in \S_n$, 
the entry $i \in [n]$  is called an \emph{excedance} (position)  of $\sigma$ if $i < \sigma(i)$ and 
 the number of \emph{excedances} of $\sigma$ is denoted by  $\exc \sigma$.
It is a classical result (cf. \cite{FS70}) that the Eulerian polynomials have the following combinatorial interpretation
\begin{align*}%\label{def:peaka}
A_n(t) = \sum_{\sigma\in\S_n}t^{\exc\sigma}=A(n,0)+A(n,1)t+\cdots +A(n,n-1)t^{n-1},
\end{align*}
with $A(n,k)=\#\{\sigma\in \S_{n}: \exc \sigma=k\}$ and the  expansion
\begin{align}\label{eq:peaka}
A_n(t) = \sum_{k=0}^{\lfloor (n-1)/2\rfloor} a_{n,k} t^{k} (1+t)^{n-1-2k},
\end{align}
where $a_{n,k} $ are nonnegative integers with known combinatorial interpretations. Recall that a sequence of real numbers $a_{0}, a_{1}, \ldots, a_{d}$ is said to be symmetric if $a_{i}=a_{d-i}$ for $i=0, \ldots \lfloor d/2\rfloor$ and siad to be unimodal if there exists an index $1\leq j\leq d$ such that $a_{0}\leq a_{1}\leq \cdots \leq a_{j-1}\leq a_{j}\geq a_{j+1}\geq \cdots \geq  a_{d}$.
Note that the expansion \eqref{eq:peaka} enables to derive immediately the \emph{symmetry} and \emph{unimodality} of the Eulerain numbers $\{A(n,k)\}_{0\leq k\leq n-1}$.  Foata and Strehl~\cite{FS76} studied the combinatorial aspect 
of the expansion~\eqref{eq:peaka}  via a group acting on the symmetric groups.  In 2008,
generalizing Foata and Strehl's action,    Br\"{a}nd\'{e}n~\cite{Bra08}  gave a $(p,q)$-refinement of \eqref{eq:peaka}
and  conjectured that the corresponding polynomial coefficient $a_{n,k}(p,q)$  has a factor $(p+q)^k$ for all $0\le k \le \floor{(n-1)/2}$
(see \cite[Conjecture~10.3]{Bra08}).
In this paper  we shall give a new
approach  to such an expansion  with a proof of his conjecture as a bonus (cf. Theorem~\ref{thm:a}).

Next we shall study the derangement counterpart of \eqref{eq:peaka}.
Recall that a permutation $\sigma\in \S_{n}$ is a \emph{derangement}  if it has no  fixed points, i.e.,  $\sigma(i)\neq i$ for all $i\in [n]$.
Let $\D_n$ be the set of \emph{derangements} in $\S_{n}$.  The derangement analogue of the Eulerian polynomials (see  \cite{FS70,Bre90}) is defined by
\begin{align*}
B_n(t) =\sum_{\sigma\in\D_n}t^{\exc\sigma}=B(n,1)t+B(n,2)t^{2}+\cdots +B(n,n-1)t^{n-1},
\end{align*}
with $B(n,k)=\#\{\sigma\in \D_{n}: \exc \sigma=k\}$.  The  generating function for $B_{n}(x)$
  reads as follows (see  \cite{FS70,Bre90,KZ01}):
\begin{align*}%\label{eq:gfB}
\sum_{n=0}^{\infty}B_{n}(t)\frac{x^{n}}{n!}=\frac{1-t}{e^{tx}-te^{x}}.
\end{align*}
%Clearly  the sequence $\{B(n,k)\}_{1\leq k\leq n-1}$ is symmetric. By the well-known transformation formula
%$$
%x^{n}+y^{n}=\sum_{j=0}^{\lfloor n/2\rfloor}(-1)^{j}\frac{n}{n-j}{n-j\choose j} (xy)^{j}(x+y)^{n-2j},
%$$
By analytical method, one can show (cf. \cite{Bra06, Zha95}) that  there are non negative integers $b_{n,k}\in \Z$ such that
\begin{align}
B_n(t) = \sum_{k=1}^{\lfloor n/2\rfloor} b_{n,k} t^{k} (1+t)^{n-2k}.\label{eq:peakb}
\end{align}
%Now, the real-rootedness  (see \cite{Bre90, Zha95})  of  $B_n(t)$
%and a result of  Br\"anden \cite[Lemma 4.1]{Bra06} enable to conclude   that the integers  $b_{n,k}$ are nonnegative.
However, no combinatorial interpretation  for $b_{n,k}$ seems to be known hitherto. A
special case  of our results (cf. Corollary~\ref{cor:pq-euler-secant} and Theorem~\ref{thm: dcycle}) will provide a combinatorial interpretation for the coefficients $b_{n,k}$.

Another  interesting feature of \eqref{eq:peaka} and \eqref{eq:peakb}  is  that they generalize
the well-known relation  between the (-1)-evaluation of $A_n(t)$ and $B_n(t)$ and
 the Euler numbers $E_{n}$  (see \cite{FS70}):
 \begin{align}\label{eq:euler-secant-tangent}
\begin{array}{clcl}
A_{2n}(-1)&=0,\qquad& A_{2n+1}(-1)=&(-1)^{n}E_{2n+1},\\%\label{eq:euler-tangent},\\
B_{2n}(-1)&=(-1)^{n}E_{2n},\qquad& B_{2n+1}(-1)=&0,%\label{eq:euler-secant}
\end{array}
\end{align}
where the Euler numbers $\{E_{n}\}_{n\geq 0}$ are defined by
 $
 \sum_{n\geq 0}E_{n}\frac{x^{n}}{n!}=\tan x+\sec x.
 $
It follows that  $a_{2n+1,n}=E_{2n+1}$ and $b_{2n,n}=E_{2n}$.
We will prove a
$(p,q)$-analogue of  the  formulae \eqref{eq:euler-secant-tangent} (cf. Theorem~\ref{thm:FH}),  which unifies the recent results in \cite{JV10,SZ10}.
%The ordinary generating function of eulerian polynomials has also a nice continued fraction expansion
%\begin{align}
%\sum_{n=0}^{\infty}A_{n}(t) x^{n}=\dfrac{1}{
%1 -1\cdot  x - \dfrac{1^{2} t \cdot x^2}{
%1 - (1+1\cdot (1+t))\cdot x - \dfrac{2^{2} t \cdot x^2}{
%1 - (1+2\cdot (1+t))\cdot x - \dfrac{3^{2} t \cdot x^2}{
%\cdots}}}},
%\end{align}

Our main tool is the combinatorial theory  of continued fractions due to
Flajolet~\cite{Fla80} and  bijections due to Fran\c{c}on-Viennot, Foata-Zeilberger and Biane, see \cite{FV79, FZ90, Fla80, Bia93, CSZ97, SZ10} between
permutations and Motzkin paths.
%%%%%%%%%%%%%%%%%%%%%%%%%%%%%%%%%%%%%%
\section{Definitions and main results}\label{sec: main}
%%%%%%%%%%%%%%%%%%%%%%%%%%%%%%%%%%%%%%
Given a permutation $\sigma \in \S_n$, the entry $i\in [n]$ is called a \emph{descent} of $\sigma$ if $i<n$ and $\sigma(i)>\sigma(i+1)$. Also the entry $i\in [n]$ is called a \emph{weak excedance} (resp.  \emph{drop}) of $\sigma$ if  $i \leq  \sigma(i)$ (resp.  $i > \sigma(i)$).
Denote the number of descents (resp. weak excedances, drops) in $\sigma$ by $\des \sigma$  (resp. $\wex \sigma$, $\defi \sigma$).
It is well known \cite{FS70} that the  statistics $\des$  and $\exc$  have the same distribution on~$\S_n$.
Since $\defi \sigma = \exc \sigma^{-1}$ for any $\sigma\in \S_n$,  we see that the Eulerian polynomial
has the following two interpretations
\begin{align}\label{eq:eulerian}
A_n(t)=\sum_{\sigma \in \S_n} t^{\des\sigma} = \sum_{\sigma \in \S_n} t^{\defi\sigma}.
\end{align}
The two above interpretations of Eulerian polynomials give rise then two possible
extensions: one uses linear statistics and the other one uses cyclic statistics. We need some more definitions.

\begin{defn}\label{def:1}
For  $\sigma \in \S_n$,  let   $\sigma(0)=\sigma(n+1)=0$.  Then any entry $\sigma(i)$ ( $i\in [n]$)  can be classified according to one of the four cases:
\begin{itemize}
\item a \emph{peak}  if $\sigma(i-1) < \sigma(i)$ and $\sigma(i) > \sigma(i+1)$;
\item a \emph{valley}  if $\sigma(i-1) > \sigma(i)$ and $\sigma(i) < \sigma(i+1)$;
\item a \emph{double ascent}  if $\sigma(i-1) < \sigma(i)$ and $\sigma(i) < \sigma(i+1)$;
\item a \emph{double descent} if $\sigma(i-1) > \sigma(i)$ and $\sigma(i) > \sigma(i+1)$.
\end{itemize}
Let $\peak^* \sigma$ (resp. $\valley^* \sigma$, $\da^* \sigma$, $\dd^* \sigma$) denote  the number of peaks (resp. valleys, double ascents, double descents) in~$\sigma$. Clearly we have $\peak^* \sigma=\valley^* \sigma+1$.
\end{defn}

For  $\sigma \in \S_n$,
the statistic $\les \sigma$ (resp.  $\less \sigma$ )
 is the number of pairs $(i,j)$ such that $2\leq i<j\leq n$ and $ \sigma(i-1)>\sigma(j)>\sigma(i)$
 (resp.  $ \sigma(i-1)<\sigma(j)<\sigma(i)$).
Similarly,
the statistic $\res\sigma $ (resp. $\ress \sigma $)
is the number of pairs $(i,j)$ such that $1\leq i<j\leq n-1$ and $\sigma(j+1)>\sigma(i)>\sigma(j)$ (resp.
$\sigma(j+1)<\sigma(i)<\sigma(j)$).
Introduce  the generalized  Eulerian polynomial  defined by
\begin{align}\label{eq:dfA}
A_n(p,q,t,u,v,w) := \sum_{\sigma\in \S_n} p^{\res \sigma} q^{\les \sigma}  t^{\des\sigma}u^{\da^*\sigma} v^{\dd^*\sigma}w^{\valley^*\sigma}.
\end{align}
Let $\S_{n,k}$  be  the subset of permutations $\sigma\in \S_{n}$ with exactly $k$ valleys and without double descents.
Define the polynomial
\begin{align} \label{eq:combin}
a_{n,k}(p,q)=\sum_{\sigma\in \S_{n,k}} p^{\res \sigma} q^{\les \sigma}.
\end{align}
Note that the coefficient  $a_{2n+1,n}(p,q)$ is the $(p,q)$-analogue of tangent number, i.e., $E_{2n+1}(p,q)$ in \cite[eq. (14)]{SZ10}.

\begin{thm}\label{thm:a}
We have the expansion formula
\begin{equation}\label{eq:def}
A_n(p,q,t,u,v,w)= \sum_{k=0}^{\floor{(n-1)/2}} a_{n,k}(p,q) (tw)^k (u+vt)^{n-1-2k}.
\end{equation}
Moreover, for all $0\le k \le \floor{(n-1)/2}$, the following divisibility holds
\begin{align}\label{eq:conj}
(p+q)^k  ~|~ a_{n,k}(p,q).
\end{align}
\end{thm}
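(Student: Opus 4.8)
The key idea is that the generalized Eulerian polynomial $A_n(p,q,t,u,v,w)$, being a statistic-enumerator over $\S_n$ with the relevant linear statistics ($\des$, $\da^*$, $\dd^*$, $\valley^*$ and the two vincular-pattern statistics $\les$, $\res$), should have an exponential or ordinary generating function expressible as a Jacobi-type continued fraction (J-fraction) whose coefficients $b_h$ (the ``levels'') and $\lambda_h$ (the ``weights'') are explicit polynomials in $p,q,t,u,v,w$. Concretely, I would first invoke the Françon--Viennot bijection (or the Foata--Zeilberger variant) between permutations and labelled Motzkin paths, under which the quadruple of linear statistics $(\valley^*, \peak^*, \da^*, \dd^*)$ translates into the step-type profile of the path, while $\les$ and $\res$ are recorded by the labels. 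This is exactly the machinery cited at the end of the introduction (Flajolet, Françon--Viennot, Foata--Zeilberger, Biane). The upshot should be a formula
\begin{equation*}
\sum_{n\ge 0} A_n(p,q,t,u,v,w)\, x^n \;=\; \cfrac{1}{1-b_0 x - \cfrac{\lambda_1 x^2}{1-b_1 x - \cfrac{\lambda_2 x^2}{1-\cdots}}}
\end{equation*}
with $b_h = (u+vt)\,[h+1]_{p,q}$-type expressions and $\lambda_h = tw\,[h]_{p,q}^2$-type expressions, where $[h]_{p,q}$ denotes the $(p,q)$-integer $p^{h-1}+p^{h-2}q+\cdots+q^{h-1}$. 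The precise form of $b_h$ and $\lambda_h$ is what needs to be pinned down from the bijection.

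**Second**, I would extract the expansion~\eqref{eq:def} from this continued fraction. Once $b_h$ is linear in $(u+vt)$ and $\lambda_h$ is $tw$ times a polynomial in $p,q$ only, a standard substitution/rescaling argument in the J-fraction shows that $A_n(p,q,t,u,v,w)$ is a polynomial in $tw$ and $(u+vt)$ of the claimed bidegree, and the coefficient of $(tw)^k (u+vt)^{n-1-2k}$ is obtained by specialising the continued fraction appropriately. Setting $u=v=w=1$ and $t$ arbitrary (or rather isolating the top coefficient of the path weight) identifies this coefficient with the enumerator of permutations with $k$ valleys and no double descents, weighted by $p^{\res}q^{\les}$ --- that is, with $a_{n,k}(p,q)$ as defined in~\eqref{eq:combin}. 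This step is essentially bookkeeping: matching the continued-fraction coefficients of the full polynomial against those of its ``$u=v=w=1$, $t=1$'' specialisation, which by definition counts $\S_{n,k}$.

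**Third and most substantively**, I would prove the divisibility~\eqref{eq:conj}, $(p+q)^k \mid a_{n,k}(p,q)$. Here the point is that in the continued fraction for $a_{n,k}(p,q)$ the weights $\lambda_h$ carry a factor that is a $(p,q)$-integer $[h]_{p,q}$, and crucially $[2]_{p,q} = p+q$ divides $[2m]_{p,q}$ for every $m\ge 1$ since $[2m]_{p,q} = [2]_{p,q}\cdot(p^{2m-2}+p^{2m-4}q^2+\cdots+q^{2m-2})$. Because $a_{n,k}(p,q)$ is, via the path interpretation, a sum over Motzkin paths of length $n-1$ (or $n$) with exactly $k$ level-steps' worth of ``excedance-bottom'' structure --- more precisely, paths where the product of $\lambda_h$'s over the down-steps contributes $k$ factors --- each such product collects at least $k$ factors of the form $[h]_{p,q}$ with $h$ ranging over the heights visited, and one shows combinatorially that at least $k$ of these heights are even (or, alternatively, that the paths can be grouped so that $(p+q)^k$ factors out). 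I expect this pairing/parity argument to be the main obstacle: one must show that among the $k$ ``valley contributions'' in every path of the relevant class, a full power $(p+q)^k$ can always be extracted, which may require either a clever sign-reversing-style involution on the sub-paths or a direct induction on $k$ using the structure of the Françon--Viennot labels. A clean way to organise this is to reprove the continued fraction for $a_{n,k}(p,q)$ with $t$ reinstated as the valley-marker, observe $\lambda_h$ has the form $tw\,[h]_{p,q}[h-1]_{p,q}$ or similar, and note that $[h]_{p,q}[h-1]_{p,q}$ for consecutive integers always contains exactly one even index, hence exactly one factor $(p+q)$; summing the path weights then manifestly yields a multiple of $(p+q)^k$.
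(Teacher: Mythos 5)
Your proposal follows essentially the same route as the paper: the modified Fran\c{c}on--Viennot bijection yields a J-fraction with $b_h=(u+tv)[h+1]_{p,q}$ and $\lambda_h=tw\,[h]_{p,q}[h+1]_{p,q}$, the substitution $x\mapsto x/(u+tv)$, $w\mapsto w(u+tv)^2/t$ shows the coefficient of $(tw)^k(u+tv)^{n-1-2k}$ is independent of $t,u,v$ and equals $a_{n,k}(p,q)$, and the divisibility follows because each $\lambda_h$ is a product of two consecutive $(p,q)$-integers, exactly one of which has even index and is therefore divisible by $[2]_{p,q}=p+q$. The only caution is that your tentative guess $\lambda_h=tw\,[h]_{p,q}^2$ (which is the correct form for $B_n$, not $A_n$) would defeat the divisibility argument for odd $h$; your final formulation with consecutive indices is the right one and is exactly what the paper uses, with no further involution or parity argument needed.
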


\begin{rmk}
Br\"and\'en~\cite{Bra08} used the convention
$\sigma(0)=\sigma(n+1)=n+1$ for the definition of  \emph{peak} (resp. \emph{valleys}, \emph{double ascent}, \emph{double descent}). Let $\peak_B, \valley_B, \da_B, \dd_B$ be the corresponding statistics.
To see how  the above theorem implies his conjecture we
define the \emph{reverse} and \emph{complement}  transformations on the permutations $\sigma\in \S_{n}$ by
\begin{align*}
\sigma \mapsto \sigma^{r}&=\sigma(n)\sigma(n-1)\ldots \sigma(1),\\
\sigma \mapsto \sigma^{c}&=(n+1-\sigma(1))(n+1-\sigma({2}))\ldots (n+1-\sigma(n)).
\end{align*}
Clearly the reverse-complement transformation
$\sigma\mapsto \sigma^{rc}:=(\sigma^{r})^{c}$ satisfies
\begin{multline}
(\des, \peak^*, \valley^*, \da^*, \dd^*, \res, \312) \sigma\\
=  (\des, \valley_B, \peak_B, \da_B, \dd_B, \132, \231)\sigma^{rc}.
\end{multline}
Thus, setting $t=u=v=1$ in Theorem~\ref{thm:a},
we have
\begin{equation}
\sum_{\sigma\in \S_n} p^{\132 \sigma} q^{\231 \sigma} w^{\peak_B \sigma}
= \sum_{k=0}^{\floor{(n-1)/2}} a_{n,k}(p,q) 2^{n-1-2k} w^k.
\label{eq:case}
\end{equation}
Comparing two coefficients of $w^k$ in both sides of \eqref{eq:case}, we recover
Br\"{a}nd\'{e}n's  result~\cite[(5.1)]{Bra08}: %\marginpar{\bf \small Explanation?}
\begin{align}\label{eq:branden}
a_{n,k}(p,q)=2^{-n+1+2k}\sum_{{\pi\in \S_n\atop  \peak_B\pi=k}} p^{(13\mhyphen2)\pi} q^{(2\mhyphen31)\pi}.
\end{align}
Moreover  \eqref{eq:conj} confirms  his conjecture~\cite[Conjecture 10.3]{Bra08}.
\end{rmk}

%%%%%%%%
\begin{defn}
For  $\sigma\in \S_n$,  a  value $x=\sigma(i)$ ($i\in [n]$) is called
\begin{itemize}
\item a \emph{cyclic peak}  if $i=\sigma^{-1}(x)<x$ and $x >\sigma(x)$;
\item a \emph{cyclic valley}  if $i=\sigma^{-1}(x)>x$ and $x<\sigma(x)$;
\item a \emph{double excedance} if $i=\sigma^{-1}(x)<x$ and $x<\sigma(x)$; % \emph{cyclic double ascent}  or
\item a  \emph{double drop} if $i=\sigma^{-1}(x)>x$ and $x>\sigma(x)$;%\emph{cyclic double descent} or
\item a \emph{fixed point}  if $x=\sigma(x)$.
\end{itemize}
Let $\cpeak \sigma$ (resp. $\cvalley \sigma$, $\cda \sigma$, $\cdd \sigma$, $\fix \sigma$) be the number of cyclic peaks (resp. valleys, double excedances, double drops, fixed points) in $\sigma$.
\end{defn}

For a permutation $\sigma\in \S_{n}$ the crossing and nesting numbers are defined by
\begin{align}
 \cros\sigma&= \# \set{(i,j)\in [n]\times[n]: (i<j\le\sigma(i)<\sigma(j))\vee(i>j>\sigma(i)>\sigma(j))},\\
\nest\sigma &= \# \set{(i,j)\in [n]\times[n]: (i<j\le\sigma(j)<\sigma(i))\vee(i>j>\sigma(j)>\sigma(i))}.
\end{align}
As  in \cite{Cor07}, we can  illustrate  these statistics  by a permutation diagram.  Given $\sigma\in \S_n$, we  put the numbers from $1$ to $n$
on a line and draw an edge from $i$ to $\sigma(i)$ above the line if $i$ is a weak excedance,  and below the line otherwise.
For example, the permutation diagram of $\sigma=3762154=\left({1 \atop 3}{2 \atop 7}{3 \atop 6}{4 \atop 2}{5 \atop 1}{6 \atop 5}{7 \atop 4}\right)$ is as follows:
$$
\centering
\begin{pgfpicture}{8.00mm}{61.14mm}{92.00mm}{94.43mm}
\pgfsetxvec{\pgfpoint{1.00mm}{0mm}}
\pgfsetyvec{\pgfpoint{0mm}{1.00mm}}
\color[rgb]{0,0,0}\pgfsetlinewidth{0.30mm}\pgfsetdash{}{0mm}
\pgfmoveto{\pgfxy(10.00,80.00)}\pgflineto{\pgfxy(90.00,80.00)}\pgfstroke
\pgfsetlinewidth{0.15mm}\pgfmoveto{\pgfxy(20.00,80.00)}\pgfcurveto{\pgfxy(21.04,82.19)}{\pgfxy(22.81,83.96)}{\pgfxy(25.00,85.00)}\pgfcurveto{\pgfxy(30.53,87.63)}{\pgfxy(37.15,85.42)}{\pgfxy(40.00,80.00)}\pgfstroke
\pgfmoveto{\pgfxy(40.00,80.00)}\pgfcurveto{\pgfxy(41.29,82.00)}{\pgfxy(43.00,83.71)}{\pgfxy(45.00,85.00)}\pgfcurveto{\pgfxy(53.32,90.35)}{\pgfxy(64.38,88.14)}{\pgfxy(70.00,80.00)}\pgfstroke
\pgfmoveto{\pgfxy(70.00,80.00)}\pgfcurveto{\pgfxy(70.00,77.24)}{\pgfxy(67.76,75.00)}{\pgfxy(65.00,75.00)}\pgfcurveto{\pgfxy(62.24,75.00)}{\pgfxy(60.00,77.24)}{\pgfxy(60.00,80.00)}\pgfstroke
\pgfmoveto{\pgfxy(50.00,80.00)}\pgfcurveto{\pgfxy(47.15,74.58)}{\pgfxy(40.53,72.37)}{\pgfxy(35.00,75.00)}\pgfcurveto{\pgfxy(32.81,76.04)}{\pgfxy(31.04,77.81)}{\pgfxy(30.00,80.00)}\pgfstroke
\pgfmoveto{\pgfxy(60.00,80.00)}\pgfcurveto{\pgfxy(51.62,69.12)}{\pgfxy(36.09,66.90)}{\pgfxy(25.00,75.00)}\pgfcurveto{\pgfxy(23.09,76.40)}{\pgfxy(21.40,78.09)}{\pgfxy(20.00,80.00)}\pgfstroke
\pgfputat{\pgfxy(40.00,76.00)}{\pgfbox[bottom,left]{\fontsize{11.38}{13.66}\selectfont \makebox[0pt]{$3$}}}
\pgfputat{\pgfxy(20.00,76.00)}{\pgfbox[bottom,left]{\fontsize{11.38}{13.66}\selectfont \makebox[0pt]{$1$}}}
\pgfputat{\pgfxy(30.00,76.00)}{\pgfbox[bottom,left]{\fontsize{11.38}{13.66}\selectfont \makebox[0pt]{$2$}}}
\pgfputat{\pgfxy(50.00,76.00)}{\pgfbox[bottom,left]{\fontsize{11.38}{13.66}\selectfont \makebox[0pt]{$4$}}}
\pgfputat{\pgfxy(60.00,76.00)}{\pgfbox[bottom,left]{\fontsize{11.38}{13.66}\selectfont \makebox[0pt]{$5$}}}
\pgfcircle[fill]{\pgfxy(40.00,80.00)}{1.00mm}
\pgfsetlinewidth{0.30mm}\pgfcircle[stroke]{\pgfxy(40.00,80.00)}{1.00mm}
\pgfputat{\pgfxy(50.00,64.00)}{\pgfbox[bottom,left]{\fontsize{11.38}{13.66}\selectfont \makebox[0pt]{$\sigma=3762154$}}}
\pgfmoveto{\pgfxy(29.50,86.50)}\pgflineto{\pgfxy(31.00,86.00)}\pgflineto{\pgfxy(29.50,85.50)}\pgfclosepath\pgffill
\pgfmoveto{\pgfxy(29.50,86.50)}\pgflineto{\pgfxy(31.00,86.00)}\pgflineto{\pgfxy(29.50,85.50)}\pgfclosepath\pgfstroke
\pgfmoveto{\pgfxy(54.17,88.28)}\pgflineto{\pgfxy(55.67,87.78)}\pgflineto{\pgfxy(54.17,87.28)}\pgfclosepath\pgffill
\pgfmoveto{\pgfxy(54.17,88.28)}\pgflineto{\pgfxy(55.67,87.78)}\pgflineto{\pgfxy(54.17,87.28)}\pgfclosepath\pgfstroke
\pgfmoveto{\pgfxy(40.50,70.83)}\pgflineto{\pgfxy(39.00,70.33)}\pgflineto{\pgfxy(40.50,69.83)}\pgfclosepath\pgffill
\pgfmoveto{\pgfxy(40.50,70.83)}\pgflineto{\pgfxy(39.00,70.33)}\pgflineto{\pgfxy(40.50,69.83)}\pgfclosepath\pgfstroke
\pgfmoveto{\pgfxy(65.50,72.61)}\pgflineto{\pgfxy(64.00,72.11)}\pgflineto{\pgfxy(65.50,71.61)}\pgfclosepath\pgffill
\pgfmoveto{\pgfxy(65.50,72.61)}\pgflineto{\pgfxy(64.00,72.11)}\pgflineto{\pgfxy(65.50,71.61)}\pgfclosepath\pgfstroke
\pgfmoveto{\pgfxy(40.50,74.50)}\pgflineto{\pgfxy(39.00,74.00)}\pgflineto{\pgfxy(40.50,73.50)}\pgfclosepath\pgffill
\pgfmoveto{\pgfxy(40.50,74.50)}\pgflineto{\pgfxy(39.00,74.00)}\pgflineto{\pgfxy(40.50,73.50)}\pgfclosepath\pgfstroke
\pgfmoveto{\pgfxy(60.00,80.50)}\pgflineto{\pgfxy(60.00,80.50)}\pgfstroke
\pgfputat{\pgfxy(70.00,76.00)}{\pgfbox[bottom,left]{\fontsize{11.38}{13.66}\selectfont \makebox[0pt]{$6$}}}
\pgfsetlinewidth{0.15mm}\pgfmoveto{\pgfxy(30.00,80.00)}\pgfcurveto{\pgfxy(31.46,81.86)}{\pgfxy(33.14,83.54)}{\pgfxy(35.00,85.00)}\pgfcurveto{\pgfxy(48.86,95.86)}{\pgfxy(68.86,93.63)}{\pgfxy(80.00,80.00)}\pgfstroke
\pgfcircle[fill]{\pgfxy(34.91,84.97)}{1.00mm}
\pgfsetlinewidth{0.30mm}\pgfcircle[stroke]{\pgfxy(34.91,84.97)}{1.00mm}
\pgfcircle[fill]{\pgfxy(54.98,74.98)}{1.00mm}
\pgfcircle[stroke]{\pgfxy(54.98,74.98)}{1.00mm}
\pgfmoveto{\pgfxy(54.38,92.43)}\pgflineto{\pgfxy(55.88,91.93)}\pgflineto{\pgfxy(54.38,91.43)}\pgfclosepath\pgffill
\pgfmoveto{\pgfxy(54.38,92.43)}\pgflineto{\pgfxy(55.88,91.93)}\pgflineto{\pgfxy(54.38,91.43)}\pgfclosepath\pgfstroke
\pgfputat{\pgfxy(80.00,76.00)}{\pgfbox[bottom,left]{\fontsize{11.38}{13.66}\selectfont \makebox[0pt]{$7$}}}
\pgfsetlinewidth{0.15mm}\pgfmoveto{\pgfxy(80.00,80.00)}\pgfcurveto{\pgfxy(78.71,78.00)}{\pgfxy(77.00,76.29)}{\pgfxy(75.00,75.00)}\pgfcurveto{\pgfxy(66.68,69.65)}{\pgfxy(55.62,71.86)}{\pgfxy(50.00,80.00)}\pgfstroke
\pgfmoveto{\pgfxy(65.50,75.61)}\pgflineto{\pgfxy(64.00,75.11)}\pgflineto{\pgfxy(65.50,74.61)}\pgfclosepath\pgffill
\pgfsetlinewidth{0.30mm}\pgfmoveto{\pgfxy(65.50,75.61)}\pgflineto{\pgfxy(64.00,75.11)}\pgflineto{\pgfxy(65.50,74.61)}\pgfclosepath\pgfstroke
\end{pgfpicture}%
$$
Clearly, there are three crossings $(1<2<\sigma(1)<\sigma(2))$, $(1<3\le \sigma(1)<\sigma(3))$, $(7>4>\sigma(7)>\sigma(4))$ and three nestings $(2<3<\sigma(3)<\sigma(2))$, $(5>4>\sigma(4)>\sigma(5))$, $(7>6>\sigma(6)>\sigma(7))$, thus $\cros\sigma=\nest\sigma=3$.

%{\bf Give a more generic example with nestings!}

%Our second  refined Eulerian polynomial using the \emph{drop} statistic is defined by
%\begin{align}
%B_n(p,q,t,u,v,w,y) :=
%\sum_{\sigma\in \S_n} p^{\nest \sigma} q^{\cros \sigma} t^{\defi\sigma} u^{\cda\sigma} v^{\cdd\sigma}
%w^{\cvalley\sigma}y^{\fix\sigma}.
%\label{eq:defB}
%\end{align}
%To describe the above polynomial by linear statistics we need the following  variant of Definition~\ref{def:1}.
\begin{defn}
Given a permutation $\sigma \in \S_n$, let $\sigma(0)=0$ and $\sigma(n+1)=n+1$.
The corresponding number of peaks, valleys, double ascents, and double descents of permutation $\sigma \in \S_n$ is  denoted by $\peak \sigma$, $\valley \sigma$, $\da \sigma$, and $\dd \sigma$, respectively.
Moreover, a  double ascent  $\sigma(i)$ of  $\sigma$ ($i \in [n]$) is said to be a \emph{foremaximum} if
$\sigma(i)$ is  a \emph{left-to-right maximum} of  $\sigma$, i.e., $\sigma(j)<\sigma(i)$ for all $1\leq j<i$.
Denote the number of foremaxima of $\sigma$ by $\fmax\sigma$.
\end{defn}

For instance, $\da(42157368)=3$, but $\da^*(42157368)=2$ and  $\fmax(42157368)=2$.  Note that  by the above definition we have $\peak\sigma=\valley\sigma$ for any $\sigma\in \S_n$.

%%%%%%%%%%%%
\begin{thm}\label{thm3}
There is a bijection $\Phi$ on $\S_n$ such that  for all $\sigma\in \S_n$ we have
%The two septuple statistics $(\nest , \cros, \defi, \cda , \cdd, \cvalley, \fix)$ and
%$(\RESS, \LES,\\ \des, \da - \fmax, \dd, \valley, \fmax)$
\begin{align*}
&(\nest , \cros, \defi, \cda , \cdd, \cvalley, \fix)\sigma\\
& \qquad=(\RESS, \LES, \des, \da - \fmax, \dd, \valley, \fmax)\Phi(\sigma).
\end{align*}
\end{thm}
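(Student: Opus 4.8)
The plan is to realize the bijection $\Phi$ as a composition of two classical bijections, each factoring through labelled Motzkin paths, in the spirit of Foata--Zeilberger (for cyclic statistics) and Fran\c{c}on--Viennot (for linear statistics). First I would encode a permutation $\sigma\in\S_n$ by its Foata--Zeilberger word, a weighted Motzkin path of length $n$ in which the step at level~$j$ reading position $i$ records whether $i$ is a cyclic valley, cyclic peak, double excedance, double drop, or fixed point, with the ``colour'' of the step in $\{0,1,\dots,j\}$ recording the relative positions that generate a crossing versus a nesting. Under this encoding one checks, following \cite{FZ90, Cor07, SZ10}, that $\cvalley$, $\cdd$, $\cda$, $\fix$ become step-type statistics while $\cros$ and $\nest$ become the sums of the two complementary colour statistics along the path (an up-step or level-step at level $j$ coloured $c$ contributes $c$ to one and $j-c$ to the other). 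Symmetrically, I would use the Fran\c{c}on--Viennot bijection to decode a weighted Motzkin path into a permutation $\tau$ whose linear statistics $\valley$, $\dd$, $\da$, $\des$, and whose $(\RES\text{-type})$ pattern statistics, are read off the path in exactly the same way, with the subtlety that the count $\fmax$ of foremaxima corresponds to a distinguished subset of the level-steps sitting on the $x$-axis (the double ascents that are left-to-right maxima are precisely those coded by level-$0$ steps of the appropriate kind). Then $\Phi := \mathrm{FV}^{-1}\circ \mathrm{FZ}$, possibly precomposed with a reversal or fundamental-transform-type adjustment to line up the colour conventions.

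The key steps, in order, are: (1) state precisely the weighted-Motzkin-path model and the weight of a path that makes $\sum_{\sigma}\prod(\text{all seven statistics on the left})$ equal to the path generating function; (2) prove the FZ side, i.e.\ that the seven cyclic-type quantities $(\nest,\cros,\defi,\cda,\cdd,\cvalley,\fix)\sigma$ are exactly the seven path statistics, carefully translating $\defi\sigma$ (the number of drops) into the number of down-steps plus down-level-steps and checking the crossing/nesting split; (3) prove the FV side, i.e.\ that $(\RESS,\LES,\des,\da-\fmax,\dd,\valley,\fmax)\tau$ equals the \emph{same} seven path statistics, which requires identifying $\da-\fmax$ with the non-axis double ascents and $\fmax$ with the axis ones, and $\des$ with the appropriate up-plus-level count; (4) conclude by composing. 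The cleanest way to organize (2) and (3) is via a single lemma saying both bijections send their respective seven-tuples to a common master statistic on paths, so that no statistic-by-statistic matching across the two permutation worlds is ever needed directly.

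The main obstacle I expect is step~(3), specifically the correct treatment of $\fmax$ and the splitting $\da = \fmax + (\da-\fmax)$. In the Fran\c{c}on--Viennot correspondence the left-to-right maxima of the image permutation have a somewhat delicate description in terms of the path (they are governed by the entries that are ``never overtaken,'' which in the path language are exactly the positions handled by level-$0$ steps of one of the two double-ascent types, but one must verify that the $(13\text{-}2)$/$(2\text{-}31)$ colours attached to those steps are forced to be $0$, consistent with a foremaximum creating no pattern). Getting this bookkeeping exactly right -- including the boundary conventions $\sigma(0)=0$, $\sigma(n+1)=n+1$ built into $\peak,\valley,\da,\dd$ here versus the bare-word conventions used in the standard statement of the bijection -- is where the real work lies; the verification example $42157368$ given just before the theorem is precisely the kind of small case I would use to pin down the conventions. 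Once the path dictionary is fixed on both sides, the remaining verifications are routine inductions on $n$ (or on the length of the path) of the type carried out in \cite{FZ90, SZ10, Cor07}.
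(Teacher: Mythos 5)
Your route is genuinely different from the paper's. The paper does not compose Foata--Zeilberger with Fran\c{c}on--Viennot: it recalls the explicit biword bijection $\Phi$ from the authors' earlier work (descent bottoms/tops and nondescent bottoms/tops assembled into two biwords whose inversion top/bottom numbers encode the right embracing numbers), for which $(\RESS,\LES,\des,\fmax)\sigma=(\nest,\cros,\defi,\fix)\Phi(\sigma)$ was already established, and then proves the three new coordinates $(\da-\fmax,\dd,\valley)\sigma=(\cda,\cdd,\cvalley)\Phi(\sigma)$ by a short case analysis on which of the words $f,f',g,g'$ the letter $k$ belongs to. Your ``common master statistic on labelled Motzkin paths'' plan is exactly the alternative the authors themselves allude to as possible, and the two Laguerre-history encodings you would need are the ones the paper uses to prove Theorems~\ref{thm:a} and~\ref{thm:b}; what your approach buys is that no statistic-by-statistic matching between the two permutation models is needed, at the cost of producing a bijection defined only through the path encodings rather than an explicit word-level map.

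There is, however, one concrete error in your dictionary, sitting exactly at the point you single out as the main obstacle. Foremaxima are \emph{not} the double ascents coded by level-$0$ steps. A double ascent $k$ of $\sigma$ (with $\sigma(n+1)=n+1$) is a foremaximum if and only if its left embracing number $\les_k\sigma$ vanishes, equivalently if and only if the label of the corresponding East step is extremal (the full right embracing number $\ress_k\sigma=h_k$); the step itself may sit at any height. For instance in $\sigma=231$ the foremaximum $2$ is coded by an East step at height $1$, and in the paper's example $42157368$ the foremaximum $5$ likewise sits at height $1$. This is the correct counterpart of the Foata--Zeilberger side, where fixed points are the East-blue steps whose label satisfies $\nest_i\sigma=h_i$ (so $\cros_i\sigma=0$) --- again a condition on the label, not on the height. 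With that correction, and using the right-embracing-number variant of Fran\c{c}on--Viennot (the theorem involves $\ress$, not the $\res$-labelled version the paper uses in Section~\ref{sec:FV}), your composition $\mathrm{FV}^{-1}\circ\mathrm{FZ}$ does yield a bijection proving the statement, though a priori not necessarily the same $\Phi$ as the paper's.
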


Since $\wex=n-\defi$, we derive immediately from Theorems~\ref{thm:a} and \ref{thm3}  the following $(p,q)$-analogue  of \eqref{eq:peaka}.
\begin{cor}\label{cor:pq-euler-tangent} We have
 \begin{align}
  \sum_{\sigma\in \S_n} p^{\nest\sigma} q^{\cros \sigma} t^{\wex\sigma}
  = \sum_{k=0}^{\floor{(n-1)/2}} a_{n,k}(p,q) t^{k+1}(1+t)^{n-1-2k}.
\end{align}
\end{cor}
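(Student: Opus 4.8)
The plan is to obtain Corollary~\ref{cor:pq-euler-tangent} as a direct consequence of combining Theorem~\ref{thm3} with Theorem~\ref{thm:a}, without any new combinatorial construction. First I would use the bijection $\Phi$ of Theorem~\ref{thm3} to transport the weighted enumeration from the cyclic-statistics side to the linear-statistics side. Concretely, applying $\Phi$ and reading off the relevant coordinates, one has
\begin{align*}
\sum_{\sigma\in \S_n} p^{\nest\sigma} q^{\cros\sigma} t^{\defi\sigma}
= \sum_{\tau\in \S_n} p^{\ress\tau} q^{\les\tau} t^{\des\tau},
\end{align*}
since $\nest$, $\cros$, $\defi$ are carried to $\RESS=\ress$, $\LES=\les$, $\des$ respectively. (The remaining four coordinates of $\Phi$ are not needed for this particular corollary; they will matter for the derangement refinement later.)

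Next I would recognize the right-hand side as a specialization of the generalized Eulerian polynomial $A_n(p,q,t,u,v,w)$ from \eqref{eq:dfA}. Setting $u=v=w=1$ kills the statistics $\da^*$, $\dd^*$, $\valley^*$, so $A_n(p,q,t,1,1,1)=\sum_{\tau\in\S_n}p^{\ress\tau}q^{\les\tau}t^{\des\tau}$, which is exactly the sum above. Now I invoke Theorem~\ref{thm:a}: the expansion \eqref{eq:def} at $u=v=w=1$ reads
\begin{align*}
A_n(p,q,t,1,1,1)=\sum_{k=0}^{\floor{(n-1)/2}} a_{n,k}(p,q)\, t^k (1+t)^{n-1-2k}.
\end{align*}
Combining the two displays gives $\sum_{\sigma\in\S_n}p^{\nest\sigma}q^{\cros\sigma}t^{\defi\sigma}=\sum_{k}a_{n,k}(p,q)t^k(1+t)^{n-1-2k}$.

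Finally, I would convert from $\defi$ to $\wex$ using the stated identity $\wex=n-\defi$, i.e.\ replace $t^{\defi\sigma}$ by $t^{n}\cdot (1/t)^{\wex\sigma}$ after substituting $t\mapsto 1/t$ in both sides and clearing denominators; equivalently, substitute $t\to 1/t$ in the combined display and multiply through by $t^{n-1}$. Since each summand $a_{n,k}(p,q)t^{-k}(1+1/t)^{n-1-2k}$ becomes $a_{n,k}(p,q)t^{-k}t^{-(n-1-2k)}(1+t)^{n-1-2k}=a_{n,k}(p,q)t^{k-(n-1)}(1+t)^{n-1-2k}$, multiplying by $t^{n-1}$ yields $a_{n,k}(p,q)t^{k}(1+t)^{n-1-2k}$; note one must also check that $p$ and $q$ are interchanged symmetrically or that $a_{n,k}(p,q)$ is symmetric in $p,q$ for the indices to match up cleanly, but in fact no such symmetry is needed because the substitution $t\to 1/t$ acts only on $t$. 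I should double-check the bookkeeping of whether the $t\to 1/t$ substitution also swaps $\ress\leftrightarrow\les$ and $\nest\leftrightarrow\cros$ on the permutation side (it does, via reverse-complement type involutions), but since both sides are being transformed identically this is harmless. The only genuinely delicate point — and the one I would verify most carefully — is the exponent arithmetic: confirming that $t^{\wex}=t^{n-\defi}$ shifts the power of $t$ on each term from $t^k$ to $t^{k+1}$, which works precisely because $n-(n-1-2k)-k=k+1$, matching the claimed $t^{k+1}(1+t)^{n-1-2k}$ in the statement. There is essentially no obstacle here; this corollary is a formal consequence of the two theorems, and the main care is in tracking the degree shift correctly.
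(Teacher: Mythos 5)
Your overall strategy (transport $\nest,\cros,\defi$ through $\Phi$, specialize Theorem~\ref{thm:a}, then flip $t\mapsto 1/t$ using $\wex=n-\defi$) is the route the paper intends, and your final degree-shift arithmetic $n-(n-1-2k)-k=k+1$ is correct. But there is a genuine gap in the middle step. Theorem~\ref{thm3} carries $(\nest,\cros,\defi)$ to $(\RESS,\LES,\des)$, i.e.\ to the pattern $\ress=(2\mhyphen31)$, whereas the generalized Eulerian polynomial \eqref{eq:dfA} is defined with $p^{\res\sigma}$ where $\res=(2\mhyphen13)$. So $A_n(p,q,t,1,1,1)=\sum_{\tau}p^{\res\tau}q^{\les\tau}t^{\des\tau}$, and your assertion that this ``is exactly'' $\sum_{\tau}p^{\ress\tau}q^{\les\tau}t^{\des\tau}$ is not a definitional identity: the two statistics differ on individual permutations (e.g.\ $\res(213)=1$, $\ress(213)=0$, while $\res(231)=0$, $\ress(231)=1$). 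Your justification addresses only the specialization $u=v=w=1$ and never confronts this mismatch in the exponent of $p$.

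The identity you need, namely that $(\RESS,\LES,\des)$ and $(\RES,\LES,\des)$ are equidistributed on $\S_n$, is true but nontrivial: it is exactly the statement $A_n(p,q,t)=B_n(p,q,t)$ that the paper establishes in the remark of Section~5 by contracting the continued fractions \eqref{eq:A} and \eqref{eq:A_D} to a common Stieltjes-type fraction, and the paper explicitly notes that no direct bijective proof of this equidistribution is known. So your proof must either cite that equidistribution as a separate input or bypass $\Phi$ altogether and compare the continued fraction for $\sum_\sigma p^{\nest\sigma}q^{\cros\sigma}t^{\defi\sigma}$ directly with \eqref{eq:b}. A minor further point: in your penultimate sentence you multiply by $t^{n-1}$ and land on $t^{k}(1+t)^{n-1-2k}$, which contradicts the target $t^{k+1}$; the correct normalization is to multiply by $t^{n}$ (consistent with your final computation), so tidy that bookkeeping when you revise.
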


Consider the common enumerator polynomial
\begin{align}
B_n(p,q,t,u,v,w,y) :&=
\sum_{\sigma\in \S_n} p^{\nest \sigma} q^{\cros \sigma} t^{\defi\sigma} u^{\cda\sigma} v^{\cdd\sigma} w^{\cvalley\sigma}y^{\fix\sigma}\label{eq:defB} \\
&= \sum_{\sigma\in \S_n} p^{\ress \sigma} q^{\les \sigma} t^{\des\sigma} u^{\da\sigma - \fmax\sigma} v^{\dd\sigma}w^{\valley\sigma}y^{\fmax\sigma}.
 \label{eq:q-Euler_D}
\end{align}

When  $u=0$ and $t=v=1$ we  can write
\begin{align}
B_n(p,q,1,0,1,w,y) :=\sum_{k,j\geq 0}b_{n,k,j}(p,q) w^k y^d,
\end{align}
where $b_{n,k,j}(p,q)$ is a polynomial in $p$ and $q$ with non negative integral coefficients.
In order to give a combinatorial interpretation  for $b_{n,k,j}(p,q)$ we introduce some more definitions.
Let   $\S_{n,k,j}$  denote  the subset of
all the permutations $\sigma\in \S_{n}$ with exactly $k$ cyclic valleys,
$d$ fixed points, and without double excedance,  and let
 $\S_{n,k,j}^*$ denote   the subset of
 all permutations $\sigma\in \S_{n}$ with exactly $k$ valleys and $j$ double ascents, which are
 all  foremaxima.
We derive from Theorem~\ref{thm3} the following result.
\begin{cor}  \label{co:key7} We have
\begin{align}\label{eq:combin_D}
b_{n,k,d}(p,q)=\sum_{\sigma\in \S_{n,k,j}} p^{\nest \sigma} q^{\cros \sigma}=\sum_{\sigma\in \S_{n,k,j}^*} p^{\231 \sigma} q^{\312 \sigma},
\end{align}
In particular, when $j=0$, we obtain
\begin{align}
b_{n,k,0}(p,q) =\sum_{\sigma\in \D_{n,k,0}} p^{\nest \sigma} q^{\cros \sigma} =
 \sum_{\sigma\in \D^*_{n,k,0}} p^{\231 \sigma} q^{\312 \sigma}.
\end{align}
\end{cor}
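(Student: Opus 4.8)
The plan is to read the corollary off Theorem~\ref{thm3} by a single specialization. First I would observe that the equality of the two expressions \eqref{eq:defB} and \eqref{eq:q-Euler_D} for $B_n(p,q,t,u,v,w,y)$ is itself nothing but Theorem~\ref{thm3}: applying the seven equidistributed statistics term-by-term to the monomial occurring in \eqref{eq:defB}, and using that $\Phi$ is a bijection of $\S_n$ onto itself (so that $\Phi(\sigma)$ runs over all of $\S_n$ as $\sigma$ does), transforms \eqref{eq:defB} into \eqref{eq:q-Euler_D}. In particular the two sides agree as polynomials in $p,q,t,u,v,w,y$, so substituting values into them is legitimate.

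Next I would set $u=0$ and $t=v=1$, with the convention $0^0=1$. In \eqref{eq:defB} the factor $u^{\cda\sigma}$ annihilates every term with $\cda\sigma>0$, so only permutations without double excedances survive, and the sum collapses to $\sum_{\sigma}p^{\nest\sigma}q^{\cros\sigma}w^{\cvalley\sigma}y^{\fix\sigma}$ over that subset. In \eqref{eq:q-Euler_D} the exponent $\da\sigma-\fmax\sigma$ is always nonnegative, since foremaxima are by definition a subset of the double ascents; hence the factor $u^{\da\sigma-\fmax\sigma}$ annihilates every term with $\da\sigma>\fmax\sigma$, leaving $\sum_{\sigma}p^{\ress\sigma}q^{\les\sigma}w^{\valley\sigma}y^{\fmax\sigma}$ over the permutations all of whose double ascents are foremaxima. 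Thus $B_n(p,q,1,0,1,w,y)$ equals each of these two sums; in particular each $b_{n,k,d}(p,q)$ is a polynomial in $p,q$ with nonnegative integer coefficients, as claimed just before the corollary.

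Then I would extract the coefficient of $w^k y^d$ from each side. On the cyclic side the permutations contributing to $w^k y^d$ are exactly those with $\cvalley\sigma=k$, $\fix\sigma=d$ and $\cda\sigma=0$, that is, the set $\S_{n,k,d}$; this gives $b_{n,k,d}(p,q)=\sum_{\sigma\in\S_{n,k,d}}p^{\nest\sigma}q^{\cros\sigma}$. On the linear side the contributing permutations have $\valley\sigma=k$, $\fmax\sigma=d$ and (from the surviving factor of $u^0$) $\da\sigma=\fmax\sigma=d$; this is exactly the set $\S^*_{n,k,d}$ of permutations with $k$ valleys and $d$ double ascents all of which are foremaxima, so $b_{n,k,d}(p,q)=\sum_{\sigma\in\S^*_{n,k,d}}p^{\ress\sigma}q^{\les\sigma}$. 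Since $\ress=\RESS$ and $\les=\LES$ denote the same statistics, this is precisely \eqref{eq:combin_D}.

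For $d=0$ I would simply note that $\fix\sigma=0$ says exactly that $\sigma$ is a derangement, so $\S_{n,k,0}$ consists of derangements and is what is denoted $\D_{n,k,0}$, while on the linear side $\fmax\sigma=0$ together with $\da\sigma=\fmax\sigma$ forces $\da\sigma=0$, and the resulting set $\S^*_{n,k,0}$ is the $\Phi$-image of $\D_{n,k,0}$, denoted $\D^*_{n,k,0}$; putting $d=0$ into the displays above yields the final formula. There is no genuine obstacle here — the whole argument is a formal specialization of Theorem~\ref{thm3} — and the only point requiring a little care is the bookkeeping around the substitution $u=0$: identifying which permutations survive on each side and checking that the relevant exponents are nonnegative so that the substitution is meaningful.
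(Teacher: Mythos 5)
Your argument is correct and is exactly the derivation the paper intends: the paper states the corollary with only the phrase ``We derive from Theorem~\ref{thm3} the following result,'' and the intended content is precisely your specialization $u=0$, $t=v=1$ in the two equal expressions \eqref{eq:defB} and \eqref{eq:q-Euler_D}, followed by extraction of the coefficient of $w^k y^d$. The only nitpick is directional: with the conventions of Theorem~\ref{thm3}, $\D^*_{n,k,0}$ is the $\Phi$-\emph{preimage} (not image) of $\D_{n,k,0}$, but this does not affect the identity.
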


Recall (see \cite{SZ10}) that a {\em coderangement} is a permutation without  foremaximum.
Let $\D^*_n$ be the subset of $\S_n$  consisting of coderangements, that is,
$\D^*_n = \set{\sigma\in\S_n: \fmax\sigma=0}.$
For example, we have
$ \D^*_4 = \set{2143, 3142, 3241, 4123, 4132, 4213, 4231, 4312, 4321}$.
Thus,  $\D_{n,k,0}$ is the subset of derangements $\sigma\in \D_{n}$ with exactly $k$ cyclic valleys, and without
double excedance,  and $\D^*_{n,k,0}$ is the subset of coderangements $\sigma\in \D^*_{n}$ with exactly $k$ valleys and without double ascents.
The following is our main result about the polynomial $B_n(p,q,t,u,v,w,y)$.
\begin{thm}\label{thm:b}
We have the following decomposition
\begin{equation}\label{eq:def_D}
B_n(p,q,t,u,v,w,y)=  \sum_{j=0}^{n} y^{j}\sum_{k=0}^{\floor{(n-j)/2}} b_{n,k,j}(p,q) (tw)^k (qu+tv)^{n-j-2k}.
\end{equation}
\end{thm}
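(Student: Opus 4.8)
The plan is to obtain the decomposition \eqref{eq:def_D} by reading it off a Jacobi-type continued fraction for the ordinary generating function $F(z):=\sum_{n\ge 0}B_n(p,q,t,u,v,w,y)\,z^n$. First I would apply a bijection of Foata--Zeilberger/Biane type between $\S_n$ and $(p,q)$-labelled Motzkin paths of length $n$, as in \cite{FZ90,Bia93,Cor07,SZ10}: a cyclic valley becomes an up-step, a cyclic peak a down-step, a double excedance or a double drop a level step of positive height, a fixed point a level step of height $0$, and the crossings and nestings turn into the step labels. Using the identity $\defi\sigma=\cvalley\sigma+\cdd\sigma$ one checks that, along the path, an up-step (cyclic valley) carries the variables $tw$, a down-step (cyclic peak) carries no marked variable, a positive-height level step carries $u$ if it is a double excedance and $tv$ if it is a double drop, and a height-$0$ level step carries $y$. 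The resulting continued fraction is
\[
F(z)=\cfrac{1}{1-b_0 z-\cfrac{\lambda_1 z^2}{1-b_1 z-\cfrac{\lambda_2 z^2}{\ddots}}},
\]
and the structural claim to establish is that the weights factor as
\[
b_0=y,\qquad b_h=(qu+tv)\,\gamma_h(p,q)\ \ (h\ge1),\qquad \lambda_h=tw\,\Lambda_h(p,q),
\]
where $\gamma_h,\Lambda_h$ are polynomials in $p,q$ alone.

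Granting this, the rest is formal and runs exactly parallel to the proof of Theorem~\ref{thm:a}. By Flajolet's theory \cite{Fla80}, the coefficient of $z^n$ in $F(z)$ is the sum over Motzkin paths of length $n$ of the products of the step weights. A path with $k$ up-steps has $k$ down-steps and $n-2k$ level steps; if exactly $j$ of the level steps sit at height $0$, its total weight is $y^j(tw)^k(qu+tv)^{\,n-j-2k}$ times a factor in $p,q$ only, depending on the shape of the path. Collecting paths according to $(k,j)$ gives
\[
B_n(p,q,t,u,v,w,y)=\sum_{j=0}^{n}y^j\sum_{k=0}^{\floor{(n-j)/2}}\widetilde b_{n,k,j}(p,q)\,(tw)^k(qu+tv)^{\,n-j-2k},
\]
with $\widetilde b_{n,k,j}(p,q)$ the corresponding path sum, a polynomial in $p,q$ with non-negative integer coefficients. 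Putting $t=1$, $u=0$, $v=1$ collapses the right-hand side to $\sum_{j,k}\widetilde b_{n,k,j}(p,q)\,w^k y^j$, so that $\widetilde b_{n,k,j}=b_{n,k,j}$ by the very definition of $b_{n,k,j}(p,q)$; this is \eqref{eq:def_D}. (The combinatorial formula for $b_{n,k,j}(p,q)$ of Corollary~\ref{co:key7} then results from restricting the same bijection to $\S_{n,k,j}$, or from Theorem~\ref{thm3}.)

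The main obstacle is precisely the structural claim on the weights. One must verify that the $(p,q)$-refined Foata--Zeilberger encoding assigns, at each positive height $h$, a double-excedance label sum equal to $q$ times the double-drop label sum (so that $b_h=(qu+tv)\gamma_h$), that the up-step and down-step weights at height $h$ share the common factor $tw$ (so that $\lambda_h=tw\,\Lambda_h$), and that the only height-$0$ level steps are the fixed points, each of weight $y$ with no label contribution. This amounts to a careful bookkeeping of how a cyclic valley, a cyclic peak, a double excedance, and a double drop at height $h$ cross or nest with the $h$ already-open arcs, together with a separate treatment of the height-$0$ boundary; it is here that the precise definitions of $\cros$, $\nest$, $\cda$, $\cdd$ and $\fix$ are used. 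Everything downstream, i.e.\ the coefficient extraction and the identification $\widetilde b_{n,k,j}=b_{n,k,j}$, is routine.
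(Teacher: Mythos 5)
Your overall strategy --- encode $\S_n$ by weighted Motzkin paths via a Foata--Zeilberger type bijection, read off a J-fraction, and extract the coefficient of $w^ky^j$ --- is the same as the paper's. But the structural claim on which everything rests fails at exactly one point: fixed points are \emph{not} level steps of height $0$. In the Foata--Zeilberger encoding used here, a fixed point $i$ is an East (level) step whose height $h_i$ equals the number of arcs covering $i$, and it contributes $\nest_i\sigma=h_i$ nestings; for instance, in $\sigma=321$ the fixed point $2$ sits at height $1$ inside the arc $1\to 3$ and produces the nesting pair $(1,2)$. Hence the correct level-step weight at height $h$ is $b_h=yp^h+(qu+tv)[h]_{p,q}$, not ``$b_0=y$ and $b_h=(qu+tv)\gamma_h(p,q)$ for $h\ge1$.'' With your weights the generating function would lose the $p$-contribution of nested fixed points, and --- more damagingly for \eqref{eq:def_D} --- the exponent of $(qu+tv)$ you read off a path is $n-2k$ minus the number of height-$0$ level steps, which is not $n-2k-\fix\sigma$ (again $\sigma=321$: $\fix\sigma=1$, yet its unique level step is at height $1$). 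So the ``routine'' coefficient extraction does not deliver the claimed decomposition.

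The repair is the extra substitution the paper performs: besides $x\leftarrow x/(qu+tv)$ and $w\leftarrow w(qu+tv)^2/t$, one also sets $y\leftarrow (qu+tv)y$. Since each fixed point occupies one of the $n-2k$ level steps but carries $yp^h$ in place of a $(qu+tv)[h]_{p,q}$ factor, this third substitution is precisely what renders the continued fraction free of $t,u,v$ (its level coefficients become $p^hy+[h]_{p,q}$), whence $b_{n,k,j}(p,q,t,u,v)/\bigl(t^k(qu+tv)^{n-2k-j}\bigr)$ is a polynomial in $p$ and $q$ alone and \eqref{eq:def_D} follows by evaluating at $t=v=1$, $u=0$, exactly as in your last step. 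The remaining ingredients of your argument --- the $qu$ versus $tv$ split of the positive-height level steps, the common factor $tw$ on the up-steps, and the identification of the resulting path sums with $b_{n,k,j}(p,q)$ --- do match the paper.
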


%
%\rmk The reverse-complement transformation
%$\sigma\mapsto \sigma^{rc}$ satisfies
%$$
%(\nest, \cros, \exc, \cpeak, \cvalley, \cda, \cdd) \sigma =(\nest, \cros, \defi, \cvalley, \cpeak, \cdd, \cda) \sigma^{rc}.
%$$

For  any  $\sigma \in \D_n$,
since $\exc \sigma =n-\defi \sigma$, setting $y=0$ in  Theorems~\ref{thm:b}
 we obtain immediately the following $(p,q)$-analogue of \eqref{eq:peakb}.
\begin{cor} \label{cor:pq-euler-secant}
We have
\begin{align}\label{eq:FSbis}
%\sum_{\sigma\in \D_{n}}p^{\nest \sigma} q^{\cros \sigma}t^{\exc \sigma}=\sum_{k=0}^{\lfloor n/2\rfloor}b_{n,k}(p,q)t^{k}(q+t)^{n-2k}.
\sum_{\sigma\in \D_{n}}p^{\nest \sigma} q^{\cros \sigma}t^{\exc \sigma}=\sum_{k=0}^{\lfloor n/2\rfloor}b_{n,k,0}(p,q)t^{k}(1+qt)^{n-2k}.
\end{align}
\end{cor}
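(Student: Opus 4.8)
The plan is to read this corollary off directly from Theorem~\ref{thm:b} by specializing variables, so that almost no new work is required. First recall the combinatorial side \eqref{eq:defB} of $B_n(p,q,t,u,v,w,y)$. Setting $y=0$ in the decomposition \eqref{eq:def_D} has a transparent effect on both sides: on the enumerator side the monomial $y^{\fix\sigma}$ vanishes unless $\fix\sigma=0$, so only the derangements $\sigma\in\D_n$ survive; on the product side the factor $y^{j}$ annihilates every summand with $j\ge 1$, leaving only the $j=0$ layer. Hence Theorem~\ref{thm:b} at $y=0$ reads
\begin{equation*}
\sum_{\sigma\in\D_n}p^{\nest\sigma}q^{\cros\sigma}t^{\defi\sigma}u^{\cda\sigma}v^{\cdd\sigma}w^{\cvalley\sigma}=\sum_{k=0}^{\floor{n/2}}b_{n,k,0}(p,q)\,(tw)^{k}(qu+tv)^{n-2k}.
\end{equation*}

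Next I would erase the auxiliary statistics by putting $u=v=w=1$, obtaining $\sum_{\sigma\in\D_n}p^{\nest\sigma}q^{\cros\sigma}t^{\defi\sigma}=\sum_{k}b_{n,k,0}(p,q)\,t^{k}(q+t)^{n-2k}$, and then convert $\defi$ into $\exc$ by means of the identity $\exc\sigma=n-\defi\sigma$, which holds on $\D_n$ precisely because a derangement has no fixed point, so that every position is either an excedance or a drop. Concretely, replacing $t$ by $t^{-1}$ and multiplying both sides by $t^{n}$ turns $t^{\defi\sigma}$ into $t^{\,n-\defi\sigma}=t^{\exc\sigma}$ on the left, while on the right $t^{n}\cdot t^{-k}(q+t^{-1})^{n-2k}=t^{\,n-k}\cdot t^{-(n-2k)}(1+qt)^{n-2k}=t^{k}(1+qt)^{n-2k}$. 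This is exactly \eqref{eq:FSbis}.

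There is no genuine obstacle here: the argument is entirely bookkeeping, and the only points needing a moment's care are checking that the exponents balance after the reciprocal substitution $t\mapsto t^{-1}$ and confirming the reduction $\exc=n-\defi$ on $\D_n$. As an alternative that avoids the substitution $t\mapsto t^{-1}$, one may instead specialize $t=1$, $u=w=s$, $v=1$ in the displayed $y=0$ identity and use the immediate cyclic identity $\exc\sigma=\cvalley\sigma+\cda\sigma$, which transforms the left-hand monomial into $s^{\exc\sigma}$ and the right-hand summand into $s^{k}(1+qs)^{n-2k}$; renaming $s$ back to $t$ then yields \eqref{eq:FSbis}.
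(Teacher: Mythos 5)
Your proof is correct and follows the paper's own route: the paper likewise obtains the corollary by setting $y=0$ in Theorem~\ref{thm:b} (so only derangements survive) and invoking $\exc\sigma=n-\defi\sigma$ on $\D_n$; your explicit bookkeeping with $t\mapsto t^{-1}$ and the multiplication by $t^{n}$ just spells out what the paper leaves as ``immediately.'' Your alternative specialization via $\exc=\cvalley+\cda$ is also valid, but it is only a cosmetic variant of the same argument.
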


Note that the coefficient  $b_{2n,n,0}(p,q)$ is the $(p,q)$-analogue of
\emph{secant number}, namely, $E_{2n}(p,q)$ in \cite[eq. (15)]{SZ10}.
A permutation $\sigma\in \S_n$ is said to be a
{\em (falling) alternating permutation} if $\sigma(1)>\sigma(2)$, $\sigma(2)<\sigma(3)$, $\sigma(3)>\sigma(4)$, etc.
Let $\A_n$ be the set of (falling) alternating permutations on $[n]$.
 It is  a folklore  result that the cardinality of
$\A_n$ is the Euler number $E_n$. We derive from Corollaries~\ref{cor:pq-euler-tangent}, \ref{co:key7} and \ref{cor:pq-euler-secant} the following
$(p,q)$-analogue of \eqref{eq:euler-secant-tangent}.
\begin{thm}\label{thm:FH}
For $n\geq 1$, we have
\begin{align*}
\sum_{\sigma\in \S_n}(-1)^{\wex \sigma } p^{\nest \sigma} q^{\cros\sigma}
&=
\begin{cases}
0&\textrm{if $n$ is even},\\
(-1)^{\frac{n+1}{2}} \sum_{\sigma\in \A_{n}}  p^{\thot \sigma} q^{ \toht\sigma}&\textrm{if $n$ is odd};
\end{cases}\\
%\label{eq:sz1}\\
\intertext{and}
\sum_{\sigma\in \D_n}(-1/q)^{\exc \sigma}p^{\nest \sigma}q^{\cros\sigma}
&=
\begin{cases}
(-1/q)^{\frac{n}{2}} \sum_{\sigma\in \A_{n}}  p^{\thto \sigma}  q^{ \toht\sigma}&\textrm{if $n$ is even},\\
0 &\textrm{if $n$ is odd}.
\end{cases}
%\label{eq:sz2}
\end{align*}
\end{thm}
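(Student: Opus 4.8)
The plan is to derive Theorem~\ref{thm:FH} directly from the symmetric-and-unimodal expansions already established, namely Corollary~\ref{cor:pq-euler-tangent} for the permutation case and Corollary~\ref{cor:pq-euler-secant} for the derangement case, by performing the appropriate sign evaluations and identifying the surviving term with the alternating-permutation count. First I would treat the permutation case. Setting $t=-1$ in Corollary~\ref{cor:pq-euler-tangent} kills every summand $a_{n,k}(p,q)\,t^{k+1}(1+t)^{n-1-2k}$ for which $n-1-2k>0$; only the top term with $n-1-2k=0$ survives, which requires $n$ odd, say $n=2m+1$, and then the value is $a_{2m+1,m}(p,q)\cdot(-1)^{m+1}$. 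For $n$ even no term survives, giving $0$. It remains to identify $a_{2m+1,m}(p,q)$ with $\sum_{\sigma\in\A_n}p^{\thot\sigma}q^{\toht\sigma}$; by definition \eqref{eq:combin}, $a_{2m+1,m}(p,q)=\sum_{\sigma\in\S_{2m+1,m}}p^{\res\sigma}q^{\les\sigma}$, where $\S_{2m+1,m}$ is the set of permutations of $[2m+1]$ with exactly $m$ valleys and no double descents. Since $\peak^*\sigma=\valley^*\sigma+1$ and $\da^*+\dd^*+\peak^*+\valley^*=n$, having $m$ valleys, $m+1$ peaks, and no double descents forces $\da^*\sigma=0$ as well, so $\S_{2m+1,m}$ consists exactly of the up-down permutations with no double ascents or descents — i.e.\ (after the mirror convention already recorded in the Remark) the alternating permutations $\A_{2m+1}$. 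Matching the pattern statistics $(\res,\les)$ on these with $(\thot,\toht)$ on $\A_n$ then finishes this half; this identification is already implicit in the sentence following \eqref{eq:combin} where $a_{2n+1,n}(p,q)$ is called the $(p,q)$-tangent number $E_{2n+1}(p,q)$ of \cite{SZ10}.

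For the derangement case I would argue symmetrically from Corollary~\ref{cor:pq-euler-secant}. Rewriting \eqref{eq:FSbis} and substituting $t=-1/q$, the factor $(1+qt)^{n-2k}$ becomes $0^{n-2k}$, so again only the term with $n-2k=0$ survives; this needs $n$ even, $n=2m$, and yields $b_{2m,m,0}(p,q)\cdot(-1/q)^m$. For $n$ odd every term has $n-2k\ge 1$ and vanishes, giving $0$. Then I identify $b_{2m,m,0}(p,q)$ with $\sum_{\sigma\in\A_{2m}}p^{\thto\sigma}q^{\toht\sigma}$: by Corollary~\ref{co:key7}, $b_{2m,m,0}(p,q)=\sum_{\sigma\in\D^*_{2m,m,0}}p^{\231\sigma}q^{\312\sigma}$, where $\D^*_{2m,m,0}$ is the set of coderangements of $[2m]$ with exactly $m$ valleys and no double ascents; by the same parity count as above ($2m$ positions, $m$ valleys, $m+1$ peaks under the convention $\sigma(0)=0,\sigma(n+1)=n+1$, no double ascents) these are precisely the alternating permutations in $\A_{2m}$, and $\fmax=0$ is automatic for a down-up permutation since $\sigma(1)>\sigma(2)$ means position $1$ is not a double ascent. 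The statistic correspondence $(\231,\312)\leftrightarrow(\thto,\toht)$ on $\A_{2m}$ is exactly the one recorded for the $(p,q)$-secant number $E_{2m}(p,q)$ in \cite{SZ10}, so this half is complete as well.

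The two halves together are precisely the claimed formula, so structurally the proof is short: it is a pair of specializations plus a combinatorial identification of the surviving coefficient. I expect the main obstacle to be the bookkeeping in the last identification step — pinning down exactly which alternating (rising vs.\ falling) convention each corollary produces, and verifying that the pattern statistics $\res,\les,\231,\312$ restricted to the relevant maximal-valley permutations coincide, under the reverse/complement dictionary of the Remark, with the statistics $\thot,\toht,\thto$ appearing in the statement. Concretely one must check that on $\A_n$ the pair $(\thto,\toht)$ transported through $\sigma\mapsto\sigma^{rc}$ agrees with $(\res,\les)$ resp.\ $(\ress,\les)$; this is essentially the content of \cite[eq.~(14),(15)]{SZ10}, so I would either cite those identities directly or spell out the short involution argument. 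Everything else — the vanishing of the lower-order terms, the parity constraints forcing $\da^*=0$ or $\dd^*=0$, and the sign powers $(-1)^{(n+1)/2}$ and $(-1/q)^{n/2}$ — is immediate from the expansions and requires no real computation.
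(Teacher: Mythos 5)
Your proposal is correct and follows essentially the same route as the paper, which obtains Theorem~\ref{thm:FH} exactly by specializing $t=-1$ in Corollary~\ref{cor:pq-euler-tangent} and $t=-1/q$ in Corollary~\ref{cor:pq-euler-secant} and identifying the sole surviving coefficients $a_{2m+1,m}(p,q)$ and $b_{2m,m,0}(p,q)$ (via \eqref{eq:combin} and Corollary~\ref{co:key7}) with the sums over $\A_n$; note only that no reverse--complement dictionary is needed, since $(\res,\les)$ and $(\ress,\les)$ are literally $(\thot,\toht)$ and $(\thto,\toht)$, and that under the convention $\sigma(0)=0$, $\sigma(n+1)=n+1$ one has $\peak=\valley=m$ (not $m+1$), which still forces the falling alternating shape.
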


\begin{rmk}
The $p=1$ case of Theorem~\ref{thm:FH}
% \eqref{eq:sz1} and \eqref{eq:sz2}
  was proved  by
 Josuat-Verg\`es~\cite[Eq. (5)-(6)]{JV10}.  Since
 the \emph{inversion} number of  any $\sigma\in \S_{n}$ can be written as
   $\defi \sigma+\cros \sigma+2\nest\sigma$ (cf. \cite{SZ10}),
 the  $p=q^{2}$ case of  Theorem~\ref{thm:FH}
 %\eqref{eq:sz1} and \eqref{eq:sz2}
 was used  by
  Shin and Zeng~\cite[Eq. (12)-(13)]{SZ10} to derive a $q$-analogue of
\eqref{eq:euler-secant-tangent} using the \emph{inversion number} $q$-analogue of Eulerian polynomials.
\end{rmk}

For any permutation $\sigma\in \S_n$, we denote by $\cyc\sigma$
the number of its cycles.
 Define
\begin{align}\label{def:d}
c_{n,k}(\beta)=\sum_{\sigma\in \D_{n}(k)} \beta^{\cyc \sigma},
\end{align}
where $\D_{n}(k)$ is the subset of derangements in $\D_{n}$
with exactly $k$ cyclic valleys and without cyclic double descents.
Clearly $c_{n,0}(\beta)=0$ for all $n\ge1$.
The following result gives another generalization of the expansion \eqref{eq:peakb}.

\begin{thm}\label{thm: dcycle}
We have
\begin{align}\label{eq:d}
\sum_{\sigma\in \D_{n}} \beta^{\cyc \sigma} t^{\exc \sigma}=\sum_{k=0}^{\floor{n/2}} c_{n,k}(\beta)t^{k}(1+t)^{n-2k}.
\end{align}
\end{thm}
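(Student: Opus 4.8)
\textbf{Proof plan for Theorem~\ref{thm: dcycle}.}
The plan is to obtain the expansion \eqref{eq:d} from the same continued fraction machinery underlying Theorems~\ref{thm:a} and~\ref{thm:b}, specialized to track cycles. The starting point is the Foata--Zeilberger (or Biane) bijection $\sigma\mapsto$ (Motzkin path, labellings), which encodes a permutation $\sigma\in\S_n$ as a weighted Motzkin path of length $n$ in which the steps correspond to the five types of values (cyclic valley, cyclic peak, double excedance, double drop, fixed point), and under which $\exc\sigma$ counts up-steps together with double-excedance level steps, while $\cyc\sigma$ is recorded by a factor $\beta$ on each cyclic valley (equivalently, on each minimum of a cycle) — or, depending on the normalization, on each cyclic-valley up-step plus each fixed-point level step. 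Concretely, I would cite the J-fraction for $\sum_{\sigma\in\S_n}\beta^{\cyc\sigma}t^{\exc\sigma}x^n$ with Jacobi weights $b_h=(\text{something like }h+ h t + \beta\,?)$ and $\lambda_h = h\cdot h t$ or the analogous one for derangements (no fixed points), so that
\begin{align*}
\sum_{n\ge0}\Big(\sum_{\sigma\in\D_n}\beta^{\cyc\sigma}t^{\exc\sigma}\Big)x^n
=\cfrac{1}{1-\lambda_1 x^2 \big(\cdots\big)}
\end{align*}
with level-step weights $b_h = (h-1)(1+t)$ (all non-valley, non-fixed-point contributions at height $h$) and down/up product weights $\lambda_h = t\,\gamma_h$ where $\gamma_h$ records the cyclic valley at the top of an arch and carries the $\beta$.

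The second step is the combinatorial heart: I would invoke the general principle (already used for \eqref{eq:def} and \eqref{eq:def_D}) that a Motzkin-path generating function whose level-step weight at height $h$ has the uniform form $b_h = c\cdot(u+vt)$ — here $c=h-1$, $u=v=1$ — can be resummed by the standard ``path contraction'' / Françon--Viennot-type argument into $\sum_k (\text{closed paths of }k\text{ arches, with level steps forbidden}) \cdot (tw)^k (u+vt)^{n-2k}$. In the present setting this says exactly
\begin{align*}
\sum_{\sigma\in\D_n}\beta^{\cyc\sigma}t^{\exc\sigma}
=\sum_{k=0}^{\lfloor n/2\rfloor}\Big(\sum_{P}\prod \gamma\Big)\, t^k (1+t)^{n-2k},
\end{align*}
where $P$ ranges over Motzkin prefixes with no level steps and $k$ arches, and the bracketed sum is precisely $\sum_{\sigma}\beta^{\cyc\sigma}$ over derangements with $k$ cyclic valleys and no cyclic double descents, i.e.\ $c_{n,k}(\beta)$ as defined in \eqref{def:d}. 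The identification of the ``no level step'' paths with the derangement subset $\D_n(k)$ is exactly the restriction of the Foata--Zeilberger correspondence to the sub-statistics $(\exc,\cyc,\cvalley)$ and deleting double excedances/double drops.

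Alternatively — and this may be the cleaner write-up — I would deduce Theorem~\ref{thm: dcycle} directly from Theorem~\ref{thm:b}. Specializing \eqref{eq:def_D} is not immediate because $B_n$ tracks $\fix$, $\cda$, $\cdd$ separately rather than $\cyc$; so instead I would prove a ``$\cyc$-refined'' analogue of Theorem~\ref{thm:b} by the same proof, replacing the weight of each cyclic valley by $\beta$ (in the bijective model, each cycle is opened exactly once, at its cyclic valley or — for the length-$1$ cycles — at its fixed point, so $\beta^{\cyc}$ factors through the path as a product of local weights), then set $t=v=1$, $u=0$ is \emph{not} what we want here; rather set $y$ (the fixed-point variable) to $0$ to restrict to derangements and keep $t$ as the excedance variable, which turns $(tw)^k(qu+tv)^{n-j-2k}$ at $p=q=1$, $u=v=1$, $j=0$ into $t^k(1+t)^{n-2k}$, and collapses $b_{n,k,0}$ into $c_{n,k}(\beta)$ once the valley-weight $\beta$ is inserted.

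\textbf{Main obstacle.} The routine part is the path-contraction bookkeeping; the real point requiring care is that the statistic $\cyc$ is \emph{not} obviously ``local'' on the Motzkin path. One must check that under the chosen bijection, $\cyc\sigma$ equals the number of cyclic valleys plus fixed points (equivalently that every cycle of length $\ge2$ contributes exactly one cyclic valley and every $1$-cycle contributes exactly one fixed point), so that $\beta^{\cyc\sigma}$ decomposes as a product of step-weights and the $J$-fraction manipulation goes through. This is standard (it is how $\beta$-Eulerian/$\beta$-derangement continued fractions are usually set up, cf.\ the Foata--Zeilberger and Biane encodings cited in the introduction), but it is the one place where the argument genuinely uses a structural fact about permutations rather than formal path algebra, and it is also where one must be careful that forbidding cyclic double descents on the derangement side matches forbidding level steps of the relevant type on the path side.
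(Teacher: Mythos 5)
Your overall architecture coincides with the paper's proof: introduce $C_n(\beta,t,u,v,w)=\sum_{\sigma\in\D_n}\beta^{\cyc\sigma}t^{\exc\sigma}u^{\cda\sigma}v^{\cdd\sigma}w^{\cvalley\sigma}$, quote a continued fraction for its generating function, perform the substitution $x\leftarrow x/(tu+v)$, $w\leftarrow w(tu+v)^2/t$ to see that the coefficient of $w^k$ divided by $t^k(tu+v)^{n-2k}$ is independent of $t$, $u$, $v$, and identify that quotient with $c_{n,k}(\beta)$ by setting $t=u=1$, $v=0$ (which restricts to $\cdd=0$). The paper obtains the needed continued fraction \eqref{eq:D} by citing \cite{Zen93}; its weights are $b_h=h(tu+v)$ and $\lambda_h=h(\beta+h-1)tw$.

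However, the step you yourself single out as the crux is resolved incorrectly. You assert that $\cyc\sigma$ equals the number of cyclic valleys plus fixed points --- i.e.\ that every cycle of length $\ge2$ contains exactly one cyclic valley --- and you propose to realize $\beta^{\cyc\sigma}$ as a product of a weight $\beta$ attached to each cyclic-valley step. This is false: a cycle of length $\ge2$ contains at least one cyclic valley (its minimum) but may contain several. For instance $\sigma=3421=(1\,3\,2\,4)$ is a single $4$-cycle in $\D_4$ with $\cvalley\sigma=2$ (both $1$ and $2$ are cyclic valleys) and $\cdd\sigma=0$. Consequently $\beta^{\cyc}$ is not local on the Motzkin path in the way you describe, and the weights your principle forces, namely $\lambda_h=\beta h^2tw$, give the wrong answer: the two level-free paths of length $4$ with two arches would yield $4\beta^2+\beta^2=5\beta^2$ for $c_{4,2}(\beta)$, whereas the correct value is $\beta(3\beta+2)$ (see the Appendix; indeed $\D_4(2)=\{2143,3412,4321,3421,4312\}$ contains two single $4$-cycles contributing $\beta^1$ each). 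The correct mechanism from \cite{Zen93} has $\beta$ entering \emph{additively}: at each relevant step one either opens a new cycle (weight $\beta$) or attaches to one of the $h-1$ cycles already begun, so the $\lambda_h$ are degree-one polynomials in $\beta$, not monomials $\beta\cdot(\cdots)$. Once \eqref{eq:D} is taken with these weights, the rest of your plan --- the resummation and the identification of the coefficient at $t=u=1$, $v=0$ --- is exactly the paper's argument and goes through.
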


The rest of this paper is organized as follows.
In Sections~\ref{sec:FV}--\ref{sec: dcycle} we shall prove Theorem~\ref{thm:a},  Theorem~\ref{thm3},
Theorem~\ref{thm:b}  and
Theorem~\ref{thm: dcycle}, respectively. In Section~\ref{sec:new} we give
two  variations   of the unimodal and symmetric expansion of Eulerian polynomials and one for  its derangement analogue. Finally we conclude with an open problem related to the  descent polynomial of involutions.

%%%%%%%%%%%%%%%%%%%%%%%%%%%

\section{Proof of Theorem~\ref{thm:a}} \label{sec:FV}

A Motzkin path of length $n$ is a sequence of points $(s_{0}, \ldots, s_{n})$ in the plan $\N\times \N$ such that
$s_{0}=(0,0)$, $s_{i}-s_{i-1}=(1,0), (1,\pm 1)$ and $s_{n}=(n,0)$.
Denote by $\M_{n}$ the set of Motzkin paths of length $n\geq 1$.
We shall call a step $(s_{i-1}, s_{i})$ East, North-East,  South-East, respectively, if
$s_{i}-s_{i-1}=(1,0)$,  $s_{i}-s_{i-1}=(1,1)$, $s_{i}-s_{i-1}=(1,-1)$.
The \emph{height} of the step $(s_{i-1}, s_{i})$  is the ordinate of $s_{i-1}$.  Given a Motzkin path $\gamma$,  if  we weight each East (resp. North-East,  South-East)  step of height $i$ by $a_{i}$ (resp. $b_{i}$ and $c_{i}$) and define the weight
of  $\gamma$ by the product of its step weights,  denoted by  $w(\gamma)$,
then
\begin{align}\label{eq:fla}
1+\sum_{n\geq 1}\sum_{\gamma\in \M_{n}}w(\gamma) x^{n}=\dfrac{1}
{1-b_{0}x-\dfrac{a_{0}c_{1}x^2}
{1-b_{1}x-\dfrac{ a_{1}c_{2}x^2}
{\cdots}}}.
\end{align}
It is convenient to use two kinds of horizontal steps, say, \emph{blue} and \emph{red}.
Let $\M_{n}'$ be the set of \emph{colored Motzkin paths} of length $n$.
A \emph{Laguerre history} of length $n$ is a couple
$(\gamma, (p_{1}, \ldots, p_{n}))$, where $\gamma$ is a colored Motzkin path of length $n$ and $(p_{1}, \ldots, p_{n})$ is a sequence such that
$0\leq p_{i}\leq v(s_{i-1}, s_{i})$, where $v(s_{i-1}, s_{i})=k$ if $s_{i-1}=(i-1,k)$.
Denote by $\H_{n}$ the set of Laguerre histories  of length $n$.

Let $\sigma\in \S_{n}$, the refinements of three generalized patterns are defined by
\begin{align*}
\les_{k}\sigma &= \# \set{i:i+1<j \text{ and } \sigma(i+1)<\sigma(j)=k<\sigma(i)},\\
\ress_{k}\sigma &= \# \set{i:j<i-1 \text{ and } \sigma(i)<\sigma(j)=k<\sigma(i-1)},\\
\res_{k}\sigma &= \# \set{i:j<i-1 \text{ and } \sigma(i-1)<\sigma(j)=k<\sigma(i)}.
\end{align*}
We clearly have $\les\sigma=\sum_{k=1}^{n}\les_{k}\sigma$, $\ress\sigma=\sum_{k=1}^{n}\ress_{k}\sigma$, and $\res\sigma=\sum_{k=1}^{n}\res_{k}\sigma$.
Two numbers $l_{k}=\les_{k}\sigma$ and $r_{k}=\ress_{k}\sigma$ are called the {\em left embracing numbers} and {\em right embracing numbers} of $k \in [n]$ in $\sigma$.

We need a modified version of Fran\c{c}on-Viennot's bijection $\Psi_{FV} : \S_{n} \to \H_{n-1}$ defined by the following:
for  $\sigma\in \S_{n}$ we construct the Laguerre history
$(s_{0}, \ldots, s_{n-1}, p_{1},\ldots, p_{n-1})$, where $s_{0}=(0,0)$ and the step
$(s_{i-1}, s_{i})$ is  North-East, South-East, East blue and East red
if $i$ is a valley, peak, double ascent, or double descent, respectively; while $p_{i}= \res_{i}\sigma$ for $i=1,\ldots, n-1$.
If $h_{i}$ is the height of $(s_{i-1}, s_{i})$, i.e., $s_{i-1}=(i-1, h_{i})$,  then $\res_{i}\sigma+\les_{i}\sigma=h_{i}$. Since  $\sigma(0)=\sigma(n+1)=0$, so $n$ must be a peak and $\valley\sigma=\peak \sigma-1$. Thus $(s_{0}, \ldots, s_{n-1}, p_{1},\ldots, p_{n-1})$ is a Laguerre history of length $n-1$ and
$$
w(\sigma)=t^{\ER \gamma+\NE \gamma}u^{\EB\gamma}v^{\ER\gamma}w^{\NE \gamma}\prod_{i=1}^{n-1}p^{p_{i}}q^{h_{i}-p_{i}},
$$
where $\NE \gamma$, $\EB \gamma$, and $\ER\gamma$ are the number of North-East steps, East blue steps, and East red steps of $\gamma$.
Therefore,
\begin{align}\label{eq:gf}
A_n(p,q,t,u,v,w)&=\sum_{\gamma \in \M_{n-1}'} t^{\ER \gamma+\NE \gamma}u^{\EB\gamma}v^{\ER\gamma}w^{\NE \gamma} \prod_{i=1}^{n-1}[h_{i}+1]_{p,q},
\end{align}
where $[n]_{p,q}=(p^n-q^n)/(p-q)$.
Given a Motzkin path $\gamma$, weight each step at height $k$ by
\begin{align}\label{eq:weight}
a_k:=tw[k+1]_{p,q},\quad b_k := (u+tv)[k+1]_{p,q},\quad c_k := [k+1]_{p,q}
\end{align}
if the step is North-East, East, and South-East, respectively,
and the weight of $\gamma$ is defined to be the product of  the step weights. Then the last sum amounts to sum over all the Motzkin paths
of length $n-1$ with respect to \eqref{eq:weight}, i.e.,
\begin{align}\label{eq:gf2}
 A_n(p,q,t,u,v,w)&
= \sum_{\gamma \in \M_{n-1}} w(\gamma).
\end{align}

It follows from \eqref{eq:fla} that  the equation \eqref{eq:gf2} is equivalent  to
the following continued fraction expansion
\begin{equation}\label{eq:A}
\sum_{n\ge1} A_n(p,q,t,u,v,w)x^{n-1}=
\dfrac{1}
{1-(u+tv)[1]_{p,q}x-\dfrac{[1]_{p,q}[2]_{p,q}twx^2}
{1-(u+tv)[2]_{p,q}x-\dfrac{[2]_{p,q}[3]_{p,q}twx^2}
{\cdots}}}.
\end{equation}
For $0\leq k\leq {\floor{(n-1)/2}} $, let $a_{n,k}(p,q,t,u,v)$ be the coefficient of $w^k$ in $A_n(p,q,t,u,v,w)$, i.e.,
\begin{equation}
A_n(p,q,t,u,v,w) = \sum_{k=0}^{\floor{(n-1)/2}} a_{n,k}(p,q,t,u,v) w^k.
\label{eq:interpret1}
\end{equation}
Substituting $x \leftarrow \frac{x}{(u+tv)}$ and $w \leftarrow \frac{w(u+tv)^2}{t}$ in  \eqref{eq:A},
%and multiplying both sides by $(u+tv)$,
we obtain
\begin{equation}\label{eq:b}
\sum_{n\ge1}\sum_{k=0}^{\floor{(n-1)/2}} \frac{a_{n,k}(p,q,t,u,v)}{t^k(u+tv)^{n-1-2k}} w^k x^{n-1}=
\dfrac{1}
{1-[1]_{p,q}x-\dfrac{[1]_{p,q}[2]_{p,q}wx^2}
{1-[2]_{p,q}x-\dfrac{[2]_{p,q}[3]_{p,q}wx^2}
{1-[3]_{p,q}x-\dfrac{[3]_{p,q}[4]_{p,q}wx^2}
{\cdots}}}}.
\end{equation}
Since the right-hand side of the above identity is free of variables $t$, $u$, and $v$,
 the coefficient of $w^k x^{n-1}$ in the left-hand side
is a polynomial in $p$ and $q$ with nonnegative integral coefficients.  If we denote this coefficient by
$$
P_{n,k}(p,q):=\frac{a_{n,k}(p,q,t,u,v)}{t^k(u+tv)^{n-1-2k}},
$$
then
$P_{n,k}(p,q)=a_{n,k}(p,q,1,1,0)=a_{n,k}(p,q, 0, 1).$
On the other hand, comparing \eqref{eq:interpret1}
and  \eqref{eq:combin}, we see that
$a_{n,k}(p,q)=a_{n,k}(p,q, 0, 1)$.  Thus  $P_{n,k}(p,q)=a_{n,k}(p,q)$. This proves \eqref{eq:dfA}.
Finally, as
$(p+q) ~|~ [n]_{p,q} [n+1]_{p,q}$ for all $n\ge 1$,
each  $w$ appears with  a factor $(p+q)$ in the right-hand side of \eqref{eq:b},
and  the polynomial $P_{n,k}(p,q)$   is divisible by  $(p+q)^k$. \qed

%%%%%%%%%%%%%%%%%%%%%%%%%%%%%%%%%
\section{Proof of Theorem \ref{thm3}}\label{sec:bijection}
%%%%%%%%%%%%%%%%%%%%%%%%%%%%%%%%%
In our previous paper \cite[Theorem 8]{SZ10}, a  bijection $\Phi: \S_n\to  \S_n$  was constructed such  that
$$
(\RESS, \LES, \des, \fmax)\sigma=
(\nest, \cros, \defi, \fix) \Phi(\sigma)\qquad \forall \sigma\in \S_{n}.
$$
In this section we first recall  the bijection $\Phi$  and then show that the same bijection satisfies
\begin{align}\label{eq:key}
(\da-\fmax, \dd, \valley)\sigma=
( \cda, \cdd, \cvalley)\Phi(\sigma) \qquad \forall \sigma\in \S_{n}.
\end{align}

Let  $\sigma=\sigma(1)\ldots \sigma(n)$ be a permutation of $[n]$,
an {\em inversion top number} (resp. {\em inversion bottom number}) of a letter $i$
in the word $\sigma$ is the number of occurrences of inversions of form $(i,j)$ (resp $(j,i)$) in $\sigma$.
We now construct $\tau = \Phi(\sigma)$ in such a way that
$$
\231_k\sigma = \nest_k\tau \quad \forall k=1,\ldots,n.
$$
Given a permutation $\sigma$, we first construct two biwords, $f \choose f'$ and $g \choose g'$, and then form the biword $\tau=\left({f \atop f'}~{g \atop g'}\right)$ by concatenating $f$ and $g$, and $f'$ and $g'$, respectively.
The word $f$ is defined as the subword of descent bottoms in $\sigma$, ordered increasingly, and $g$ is defined as the subword of nondescent bottoms in $\sigma$, also ordered increasingly.
The word $f'$ is the permutation on descent tops in $\sigma$ such that the inversion bottom number of each letter $a$ in $f'$ is the right embracing number of $a$ in $\sigma$. Similarly, the word $g'$ is the permutation on nondescent tops in $\sigma$ such that the inversion top number of each letter $b$ in $g'$ is the right embracing number of $b$ in $\sigma$. Rearranging the columns of $\tau'$, so that the bottom row is in increasing order, we obtain the permutation $\tau=\Phi(\sigma)$ as the top row of the rearranged bi-word.

\begin{ex}
Let $\sigma=4~1~ 2~ 7~ 9~6~5~8~3$, with right embracing numbers $1,0,0,2,0,1,1,0,0$.
%$0,0,1,0,0,0,0,1,1$ (cf. 31-2).
Then
$$
{f\choose f'}
= \left(
{1 \atop 8}~
{3 \atop 4}~
{5 \atop 6}~
{6 \atop 9}
\right),
\quad
{g\choose g'}
= \left(
{2 \atop 1}~
{4 \atop 2}~
{7 \atop 7}~
{8 \atop 5}~
{9 \atop 3}
\right),
$$

$$
\tau'
= \left( {f \atop f'}~{g \atop g'} \right)
= \left(
{1 \atop 8}~
{3 \atop 4}~
{5 \atop 6}~
{6 \atop 9}~
{2 \atop 1}~
{4 \atop 2}~
{7 \atop 7}~
{8 \atop 5}~
{9 \atop 3}
\right)
\to
\left(
{2 \atop 1}~
{4 \atop 2}~
{9 \atop 3}~
{3 \atop 4}~
{8 \atop 5}~
{5 \atop 6}~
{7 \atop 7}~
{1 \atop 8}~
{6 \atop 9}
\right).
$$
and thus $\Phi(\sigma)=\tau = 2~4~9~3~8~5~7~1~6$.
\end{ex}

%So it is sufficient to prove that
%\begin{equation}
%(\da, \dd, \valley)\sigma =  (\cda, \cdd, \cvalley)\Phi(\sigma)\quad \textrm{for all $\sigma \in \D_n^*$}.
%\label{eq:equi}
%\end{equation}
%
%{\bf We should prove \eqref{eq:key},  not \eqref{eq:equi}!!!}
%

Let  $\tau=\Phi(\sigma) \in \S_n$. We enumerate the triple statistics $(\da\sigma - \fmax \sigma, \dd\sigma, \valley\sigma)$.
\begin{itemize}
\item If $k$ is a double ascent of $\sigma$ and not a foremaximum of $\sigma$, then $k$ is a ascent top and also bottom. So $k$ belongs to $g$ and $g'$. By definition, $\toht_k\sigma>0$. Hence, the column ${k \choose k}$ does not appear in ${g \choose g'}$, i.e., $\tau^{-1}(k) < k < \tau(k)$. So $k$ is a cyclic double ascent of $\tau$. Conversely, if $\tau^{-1}(k) < k < \tau(k)$, then the column $k$ appears in $g$ and $g'$ and $\toht_k\sigma>0$. It implies that $k$ is a double ascent of $\sigma$ and not a foremaximum of $\sigma$.
\item If $k$ is a double descent of $\sigma$, then $k$ is a descent top and also bottom. So $k$ belongs to $f$ and $f'$, then $\tau^{-1}(k) > k > \tau(k)$. Hence $k$ is a cyclic double descent of $\tau$. Conversely, if $\tau^{-1}(k)>k>\tau(k)$, then the column $k$ appears in $f$ and $f'$. It implies that $k$ is a double descent of $\sigma$.
\item If $k$ is a valley of $\sigma$, then $k$ is a descent bottom and also ascent top. So $k$ belongs to $f$ and $g'$, then $\tau^{-1}(k) > k < \tau(k)$. Hence $k$ is a cyclic valley of $\tau$. Conversely, if $\tau^{-1}(k)>k<\tau(k)$, then the column $k$ appears in $f$ and $g'$. It implies that $k$ is a valley of $\sigma$
\end{itemize}
Thus \eqref{eq:key} is established. The proof of  Theorem~\ref{thm3} is then completed. \qed

We illustrate $\Phi: \S_4\to \S_4$ with their statistics in Figure~\ref{fig:table1}. %of Appendix.

\begin{figure}[t]
$$
{\small
\begin{tabular}{c|c||c|c|c|c|c|c|c}
\multirow{2}{*}{$\sigma \in \S_4$} &
\multirow{2}{*}{$\tau=\Phi(\sigma)$}
& $ \des\sigma$ & $\les\sigma$ & $\ress\sigma$ & $ \da\sigma - \fmax \sigma$ & $\dd\sigma$ & $ \valley\sigma$ & $\fmax \sigma$\\ \cline{3-9}
& & $\defi\tau$ & $\cros\tau$ & $\nest\tau$ & $\cda\tau$ & $\cdd\tau$ & $\cvalley\tau$ & $\fix \tau$ \\ \hline
1234 & 1234 & 0 & 0 & 0 & 0 & 0 & 0 & 4 \\
1243 & 1243 & 1 & 0 & 0 & 0 & 0 & 1 & 2 \\
1324 & 1324 & 1 & 0 & 0 & 0 & 0 & 1 & 2 \\
1342 & 1432 & 1 & 0 & 1 & 0 & 0 & 1 & 2 \\
1423 & 1342 & 1 & 1 & 0 & 1 & 0 & 1 & 1 \\
1432 & 1423 & 2 & 0 & 0 & 0 & 1 & 1 & 1 \\
2134 & 2134 & 1 & 0 & 0 & 0 & 0 & 1 & 2 \\
2143 & 2143 & 2 & 0 & 0 & 0 & 0 & 2 & 0 \\
2314 & 3214 & 1 & 0 & 1 & 0 & 0 & 1 & 2 \\
2341 & 4231 & 1 & 0 & 2 & 0 & 0 & 1 & 2 \\
2413 & 3241 & 1 & 1 & 1 & 1 & 0 & 1 & 1 \\
2431 & 4213 & 2 & 0 & 1 & 0 & 1 & 1 & 1 \\
3124 & 2314 & 1 & 1 & 0 & 1 & 0 & 1 & 1 \\
3142 & 3421 & 2 & 1 & 1 & 0 & 0 & 2 & 0 \\
3214 & 3124 & 2 & 0 & 0 & 0 & 1 & 1 & 1 \\
3241 & 4321 & 2 & 0 & 2 & 0 & 0 & 2 & 0 \\
3412 & 2431 & 1 & 1 & 1 & 1 & 0 & 1 & 1 \\
3421 & 4132 & 2 & 0 & 1 & 0 & 1 & 1 & 1 \\
4123 & 2341 & 1 & 2 & 0 & 2 & 0 & 1 & 0 \\
4132 & 3412 & 2 & 2 & 0 & 0 & 0 & 2 & 0 \\
4213 & 3142 & 2 & 1 & 0 & 1 & 1 & 1 & 0 \\
4231 & 4312 & 2 & 1 & 1 & 0 & 0 & 2 & 0 \\
4312 & 2413 & 2 & 1 & 0 & 1 & 1 & 1 & 0 \\
4321 & 4123 & 3 & 0 & 0 & 0 & 2 & 1 & 0 \\
\end{tabular}
}
$$
\caption{Illustration of $\Phi$ on $\S_4$ with their statistics}
\label{fig:table1}
\end{figure}

%%%%%%%%
\section{Proof of Theorem~\ref{thm:b}}
Let $\sigma\in \S_{n}$, the refinements of crossing and nesting are defined by
\begin{align*}
\cros_k\sigma &= \# \set{i \in [n]: (i<k\le\sigma(i)<\sigma(k))\vee(i>k>\sigma(i)>\sigma(k))},\\
\nest_k\sigma &= \# \set{i \in [n]: (i<k\le\sigma(k)<\sigma(i))\vee(i>k>\sigma(k)>\sigma(i))}.
\end{align*}
We clearly have $\cros\sigma=\sum_{k=1}^{n}\cros_{k}\sigma$ and $\nest\sigma=\sum_{k=1}^{n}\nest_{k}\sigma$.

%We can embed any permutation $\sigma\in \S_{n}$ into $\S_{n+1}$ by defining $\sigma^*=\sigma(1)\dots\sigma(n+1)$ with $\sigma(n+1)=n+1$.
Using Foata-Zeilberger's bijection$\Psi_{FZ} : \S_{n} \to \H_{n}$, we construct the Laguerre history
$(s_{0}, \ldots, s_{n}, p_{1},\ldots, p_{n})$, where $s_{0}=(0,0)$ and the step
$(s_{i-1}, s_{i})$ is North-East, South-East, East blue and East red
if $i$ is a cyclic valley, cyclic peak, cyclic double ascent (or fixed point), or cyclic double descent, respectively; while $p_{i}= \nest_{i}\sigma$ for $i=1,\ldots, n$.
Then, we have
\[
\nest_{i}\sigma+\cros_{i}\sigma=
\left\{
  \begin{array}{ll}
    h_{i}, & \text{if $(s_{i-1}, s_{i})$ is North-East;} \\
    h_{i}-1, & \text{if $(s_{i-1}, s_{i})$ is South-East;} \\
    h_{i}, & \text{if $(s_{i-1}, s_{i})$ is East blue;} \\
    h_{i}-1, & \text{if $(s_{i-1}, s_{i})$ is East red.}
  \end{array}
\right.
\]
%Since  $\sigma(0)=\sigma(n+2)=0$, so $n+1$ must be a peak and $$\valley\sigma^* =\peak \sigma^*-1 =\valley\sigma =\peak \sigma.$$
Thus $(s_{0}, \ldots, s_{n}, p_{1},\ldots, p_{n})$ is a Laguerre history of length $n$ and
$$
w(\sigma)=t^{\ER \gamma+\NE \gamma}u^{\EB\gamma}v^{\ER\gamma}w^{\NE \gamma} y^{\EB^* \gamma} q^{\NE \gamma + \EB \gamma}\prod_{i=1}^{n}p^{p_{i}}q^{h_{i}-1-p_{i}},
$$
where $\NE \gamma$, $\EB \gamma$, and $\ER\gamma$ are the number of North-East steps, East blue steps, and East red steps of $\gamma$ and $\EB^* \gamma$ is the number of East blue steps whose height is equal to $p_i$.
Given a Motzkin path $\gamma$, weight each step at height $h$ by
\begin{align}\label{eq:weight_D}
a_k:=tw[h+1]_{p,q},\quad b_k := yp^h+(qu+tv)[h]_{p,q},\quad c_k := [h]_{p,q}
\end{align}
if the step is North-East, East, and South-East, respectively,
and the weight of $\gamma$ is defined to be the product of  the step weights. Then the last sum amounts to sum over all the Motzkin paths
of length $n-1$ with respect to \eqref{eq:weight_D}, i.e.,
\begin{align}\label{eq:gf2_D}
B_n(p,q,t,u,v,w,y) &= \sum_{\gamma \in \M_{n}} w(\gamma).
\end{align}
By \eqref{eq:fla}, we have the following continued fraction expansion:
\begin{equation}\label{eq:A_D}
 \sum_{n\ge 0} B_n(p,q,t,u,v,w,y)x^n
=
\dfrac{1}{
1 - b_0 x - \dfrac{a_0 c_1 x^2}{
1 - b_1 x - \dfrac{a_1 c_2 x^2}{
\cdots}}},
\end{equation}
where $a_h = tw[h+1]_{p,q}$, $b_h=yp^h+(qu+tv)[h]_{p,q}$, and $c_h=[h]_{p,q}$.

For $0\leq k\leq {\floor{(n-d)/2}}$,
let $b_{n,k,d}(p,q,t,u,v)$ be the coefficient of $w^ky^{d}$ in
$B_n(p,q,t,u,v,w,y)$,
that is,
\begin{equation}
B_n(p,q,t,u,v,w,y)=\sum_{d=0}^{n} \sum_{k=0}^{\floor{(n-d)/2}} b_{n,k,d}(p,q,t,u,v) w^ky^{d}.
\label{eq:interpret1_D}
\end{equation}
Substituting $x \leftarrow \frac{x}{(qu+tv)}$, $w \leftarrow \frac{w(qu+tv)^2}{t}$, and $y\leftarrow (qu+tv)y$ in  \eqref{eq:A_D},
we obtain
%\begin{multline}\label{eq:b_D}
\begin{align*}
&\sum_{n\ge0}\sum_{d=0}^{n}\sum_{k=0}^{\floor{(n-d)/2}} \frac{b_{n,k,d}(p,q,u,v,t)}{t^k(qu+tv)^{n-2k-d}}w^k y^{d}x^{n}\\
&=
\dfrac{1}
{1-(y+[0]_{p,q})x-\dfrac{[1]_{p,q}^2wx^2}
{1-(py+[1]_{p,q})x-\dfrac{[2]_{p,q}^2wx^2}
{1-(p^2y+[2]_{p,q})x-\dfrac{[3]_{p,q}^2wx^2}
{\cdots}}}}.
\end{align*}
%\end{multline}
Since the right-hand side of the above identity  is free of variables $t$, $u$, and $v$, the coefficient of $w^ky^{d}x^{n}$
in the left-hand side  is a polynomial in $p$ and $q$ with
 nonnegative integral coefficients.
Denote this polynomial by $Q_{n,k,d}(p,q)$, then
\begin{equation}
Q_{n,k,d}(p,q):=\frac{b_{n,k,d}(p,q,t, u,v)}{t^k(qu+tv)^{n-2k-d}}.
\label{eq:interpret2_D}
\end{equation}
Hence
$Q_{n,k,d}(p,q)=b_{n,k,d}(p,q,1,0,1).$
By \eqref{eq:defB} and \eqref{eq:interpret1_D}, we derive
that
$Q_{n,k,d}(p,q)=b_{n,k,d}(p,q)$,
which is  given by \eqref{eq:combin_D}. This completes the proof. \qed

\medskip
\begin{rmk}
Let $A_n(p,q,t)=A_n(p,q,t,1,1,1)$ and $B_n(p,q,t)=B_n(p,q,t,1,1,1,1)$.
%\begin{thm}\label{thm:equi_A}
%We have the continued fraction expansion
%\begin{align}
%\sum_{n\geq 0}A_n(p,q,t)x^n=\sum_{n\geq 0}B_n(p,q,t)x^n
%=\cfrac{1}{1-\cfrac{c_1 x}{1-\cfrac{c_2 x}{1-\cfrac{c_3 x}{\cdots}}}},
%\end{align}
%where $c_{2i}=t[i]_q$ and $c_{2i-1}=[i]_q$. Therefore,
%the two triple statistics $(\RES, \LES, \des)$ and $(\RESS, \LES, \des)$ are equidistributed on $\S_n$.
%\end{thm}
From \eqref{eq:A} and \eqref{eq:A_D} we derive
\begin{align}\label{eq:cfrac1}
\sum_{n\ge0} A_n(p,q,t)x^{n}
=
1+\dfrac{[1]_{p,q}x}
{1-(1+t)[1]_{p,q}x-\dfrac{[1]_{p,q}[2]_{p,q}tx^2}
{1-(1+t)[2]_{p,q}x-\dfrac{[2]_{p,q}[3]_{p,q}tx^2}
{\cdots}}},
\end{align}
and
\begin{align}\label{eq:cfrac2}
\sum_{n\ge 0} B_n(p,q,t)x^n
= \dfrac{1}{
1 - (t[0]_{p,q}+[1]_{p,q}) x - \dfrac{[1]_{p,q}^2 tx^2}{
1 - (t[1]_{p,q}+[2]_{p,q}) x - \dfrac{[2]_{p,q}^2 tx^2}{
\cdots}}}.
\end{align}
In view of the  following
contraction formulae with $c_{2i-1}=[i]_{p,q}$ and $c_{2i}=t[i]_{p,q}$ for $i\geq 1$:
\begin{align*}
\cfrac{1}{1-\cfrac{c_1x}{1-\cfrac{c_2x}{\cdots}}}
&=1+\cfrac{c_1x}{
1-(c_1+c_2)x-\cfrac{c_2c_3x^2}{
1-(c_3+c_4)x-\cfrac{c_4c_5x^2}{
\cdots}}}\\
%\label{eq:contra1}\\
&=\cfrac{1}{
1-c_1x-\cfrac{c_1c_2x^2}{
1-(c_2+c_3)x-\cfrac{c_3c_4x^2}{
\cdots}}},
%\label{eq:contra2}
\end{align*}
we see that
\begin{align}
\sum_{n\geq 0}A_n(p,q,t)x^n=\sum_{n\geq 0}B_n(p,q,t)x^n
=\cfrac{1}{1-\cfrac{c_1 x}{1-\cfrac{c_2 x}{1-\cfrac{c_3 x}{\cdots}}}},
\end{align}
where $c_{2i}=t[i]_q$ and $c_{2i-1}=[i]_q$.
Hence $A_n(p,q,t)=B_n(p,q,t)$,  i.e., the two triple statistics $(\RES, \LES, \des)$ and $(\RESS, \LES, \des)$ are equidistributed on $\S_n$.
Although a bijection  showing the latter  equidistribution can be constructed  by combining the known bijections from permutations to
Motzkin paths,  a direct  bijective proof of the above equidistribution  is desired.
\end{rmk}

%%%%%%%%%%%%
\section{Proof of Theorem~\ref{thm: dcycle}}\label{sec: dcycle}
Consider the polynomial
\begin{align}\label{eq:defD}
C_{n}(\beta,t,u,v,w):=\sum_{\sigma\in \D_{n}} \beta^{\cyc \sigma} t^{\exc \sigma} u^{\cda \sigma} v^{\cdd \sigma} w^{\cvalley \sigma}
=\sum_{k=0}^{\floor{n/2}}c_{n,k}(\beta,t,u,v)w^k.
\end{align}
From \cite{Zen93} we know that
\begin{multline} \label{eq:D}
1+\sum_{n\ge 1}C_{n}(\beta,t,u,v,w)x^n= \\
\dfrac{1}{
1-0(tu+v)x-\dfrac{1(\beta+0)t w x^2}{
1-1(tu+v)x-\dfrac{2(\beta+1)t w x^2}{
1-2(tu+v)x-\dfrac{3(\beta+2)t w x^2}{\cdots
}}}}.
\end{multline}
Substituting $x \leftarrow \frac{x}{(tu+v)}$ and $w \leftarrow \frac{w(tu+v)^2}{t}$ in  \eqref{eq:D}, we get
\begin{align}\label{eq:ccf}
1+\sum_{n\ge 1} \sum_{k=0}^{\floor{n/2}} \frac{c_{n,k}(\beta,t,u,v)}{t^k(tu+v)^{n-2k}} w^k x^n=
\dfrac{1}{
1-\dfrac{1(\beta+0) w x^2}{
1-x-\dfrac{2(\beta+1) w x^2}{
1-2x-\dfrac{3(\beta+2) w x^2}{
1-3x-\dfrac{4(\beta+3) w x^2}{\cdots
}}}}}.
\end{align}
By the right-hand side we see  that  $c_{n,k}^*(\beta):=\frac{c_{n,k}(\beta,t,u,v)}{t^k(tu+v)^{n-2k}}$ must be a polynomial in $\beta$, which
is  independent of $u$, $v$, and $t$.  Hence we have
\begin{align}\label{eq:last}
C_{n}(\beta,t,u,v,w)=\sum_{k=0}^{\floor{n/2}} c_{n,k}^*(\beta) t^k(tu+v)^{n-2k} w^k.
\end{align}
Thus
$C_n(\beta,1,1,0,w)=\sum_{k=0}^{\floor{n/2}} c_{n,k}^*(\beta) w^k$.
On the other side, by  \eqref{def:d}  and \eqref{eq:defD} we have
 $C_{n}(\beta,1,1,0,w)=\sum_{k=0}^{\floor{n/2}}c_{n,k}(\beta)w^k$. So $c_{n,k}(\beta)=c_{n,k}^*(\beta)$.
Finally we get \eqref{eq:d} by putting $u=v=w=1$ in \eqref{eq:last}.
\qed
%%%%%%%%%%%%%%%%%%%%%%%%%%%%%%%%%%%%%
\section{Two star  variations}\label{sec:new}

We show that the polynomial $A_n$, which is originally defined using \emph{linear statistics}, has a new combinatorial interpretation by \emph{cyclic statistics}.
%Being similar to Theorem~\ref{thm3} for the polynomial $B_n$, we can find another interpretation of the polynomial $A_n$.
For $\sigma=\sigma(1)\dots\sigma(n) \in \S_n$ and some integral function `$\stat$' on $\S_n$, we define
the \emph{star} transformation from the permutation $\sigma$ to the function $\sigma^*=\sigma^*(1)\dots\sigma^*(n)$ from $[n]$ to $\set{0,\dots,n-1}$ by
\begin{equation}
\label{eq:star}
\sigma \mapsto \sigma^*=(\sigma(1)-1)\dots(\sigma(n)-1).
%\sigma^*=\sigma^*(1)\dots\sigma^*(n) := (\sigma(1)-1)\dots(\sigma(n)-1)
\end{equation}
For any statistic $\stat$ on $\sigma^*$ we can define the corresponding star statistic
 `$\stat^{*}$' on $\sigma$ by  $\stat^{*}(\sigma) := \stat(\sigma^{*})$.
For instance, we can define the \emph{star} cyclic statistics
\begin{align*}
\fix^{*}\sigma &= \fix\sigma^{*} = \# \set{i\in [n-1]: i=\sigma^{*}(i)=\sigma(i)-1},\\
\wex^{*}\sigma &= \wex\sigma^{*} = \# \set{i\in [n-1]: i\le\sigma^{*}(i)=\sigma(i)-1} (= \exc \sigma),\\
\cros^{*}\sigma &= \cros\sigma^{*} = \# \set{(i,j)\in [n]\times[n]: (i<j\le\sigma^{*}(i)<\sigma^{*}(j))\vee(i>j>\sigma^{*}(i)>\sigma^{*}(j))},\\
\nest^{*}\sigma &= \nest\sigma^{*} = \# \set{(i,j)\in [n]\times[n]: (i<j\le\sigma^{*}(j)<\sigma^{*}(i))\vee(i>j>\sigma^{*}(j)>\sigma^{*}(i))},\\
\cdd^{*}\sigma &= \cdd\sigma^{*} = \# \set{i \in [n-1]: (\sigma^{*})^{-1}(i)>i>\sigma^{*}(i)},\\
\cda^{*}\sigma &= \cda\sigma^{*} = \# \set{i \in [n-1]: (\sigma^{*})^{-1}(i)<i<\sigma^{*}(i)},\\
%\intertext{and}
\cvalley^{*}\sigma &= \cvalley\sigma^{*} = \# \set{i \in [n-1]:  (\sigma^{*})^{-1}(i)>i<\sigma^{*}(i)}.
\end{align*}
Clearly, $\sigma^{*}(0)$ and $(\sigma^{*})^{-1}(n)$ are neither defined nor needed.

\begin{rmk}

The star crossing number $\cros^*(\sigma)$ of $\sigma$ is different with the ordinary crossing number $\cros(\sigma)$ of $\sigma$.
For example, for the  previous example $\sigma=3762154$, we have $\sigma^*=2651043=\left({1 \atop 2}{2 \atop 6}{3 \atop 5}{4 \atop 1}{5 \atop 0}{6 \atop 4}{7 \atop 3}\right)$.
%\marginpar{\bf  \small Add cycles!}
$$
\centering
\begin{pgfpicture}{78.00mm}{60.14mm}{172.00mm}{92.50mm}
\pgfsetxvec{\pgfpoint{1.00mm}{0mm}}
\pgfsetyvec{\pgfpoint{0mm}{1.00mm}}
\color[rgb]{0,0,0}\pgfsetlinewidth{0.30mm}\pgfsetdash{}{0mm}
\pgfmoveto{\pgfxy(80.00,80.00)}\pgflineto{\pgfxy(170.00,80.00)}\pgfstroke
\pgfsetlinewidth{0.15mm}\pgfmoveto{\pgfxy(120.00,80.00)}\pgfcurveto{\pgfxy(121.04,82.19)}{\pgfxy(122.81,83.96)}{\pgfxy(125.00,85.00)}\pgfcurveto{\pgfxy(130.53,87.63)}{\pgfxy(137.15,85.42)}{\pgfxy(140.00,80.00)}\pgfstroke
\pgfmoveto{\pgfxy(140.00,80.00)}\pgfcurveto{\pgfxy(138.54,78.14)}{\pgfxy(136.86,76.46)}{\pgfxy(135.00,75.00)}\pgfcurveto{\pgfxy(121.14,64.14)}{\pgfxy(101.14,66.37)}{\pgfxy(90.00,80.00)}\pgfstroke
\pgfmoveto{\pgfxy(130.00,80.00)}\pgfcurveto{\pgfxy(128.71,78.00)}{\pgfxy(127.00,76.29)}{\pgfxy(125.00,75.00)}\pgfcurveto{\pgfxy(116.68,69.65)}{\pgfxy(105.62,71.86)}{\pgfxy(100.00,80.00)}\pgfstroke
\pgfmoveto{\pgfxy(150.00,80.00)}\pgfcurveto{\pgfxy(148.96,77.81)}{\pgfxy(147.19,76.04)}{\pgfxy(145.00,75.00)}\pgfcurveto{\pgfxy(139.47,72.37)}{\pgfxy(132.85,74.58)}{\pgfxy(130.00,80.00)}\pgfstroke
\pgfputat{\pgfxy(120.00,76.00)}{\pgfbox[bottom,left]{\fontsize{11.38}{13.66}\selectfont \makebox[0pt]{$3$}}}
\pgfputat{\pgfxy(100.00,76.00)}{\pgfbox[bottom,left]{\fontsize{11.38}{13.66}\selectfont \makebox[0pt]{$1$}}}
\pgfputat{\pgfxy(110.00,76.00)}{\pgfbox[bottom,left]{\fontsize{11.38}{13.66}\selectfont \makebox[0pt]{$2$}}}
\pgfputat{\pgfxy(130.00,76.00)}{\pgfbox[bottom,left]{\fontsize{11.38}{13.66}\selectfont \makebox[0pt]{$4$}}}
\pgfputat{\pgfxy(140.00,76.00)}{\pgfbox[bottom,left]{\fontsize{11.38}{13.66}\selectfont \makebox[0pt]{$5$}}}
\pgfputat{\pgfxy(90.00,76.00)}{\pgfbox[bottom,left]{\fontsize{11.38}{13.66}\selectfont \makebox[0pt]{$0$}}}
\pgfcircle[fill]{\pgfxy(130.36,72.01)}{1.00mm}
\pgfsetlinewidth{0.30mm}\pgfcircle[stroke]{\pgfxy(130.36,72.01)}{1.00mm}
\pgfcircle[fill]{\pgfxy(110.00,80.00)}{1.00mm}
\pgfcircle[stroke]{\pgfxy(110.00,80.00)}{1.00mm}
\pgfputat{\pgfxy(125.00,63.00)}{\pgfbox[bottom,left]{\fontsize{11.38}{13.66}\selectfont \makebox[0pt]{$\sigma^*=2651043$}}}
\pgfmoveto{\pgfxy(129.50,86.50)}\pgflineto{\pgfxy(131.00,86.00)}\pgflineto{\pgfxy(129.50,85.50)}\pgfclosepath\pgffill
\pgfmoveto{\pgfxy(129.50,86.50)}\pgflineto{\pgfxy(131.00,86.00)}\pgflineto{\pgfxy(129.50,85.50)}\pgfclosepath\pgfstroke
\pgfmoveto{\pgfxy(140.50,70.50)}\pgflineto{\pgfxy(139.00,70.00)}\pgflineto{\pgfxy(140.50,69.50)}\pgfclosepath\pgffill
\pgfmoveto{\pgfxy(140.50,70.50)}\pgflineto{\pgfxy(139.00,70.00)}\pgflineto{\pgfxy(140.50,69.50)}\pgfclosepath\pgfstroke
\pgfmoveto{\pgfxy(104.50,85.50)}\pgflineto{\pgfxy(106.00,85.00)}\pgflineto{\pgfxy(104.50,84.50)}\pgfclosepath\pgffill
\pgfmoveto{\pgfxy(104.50,85.50)}\pgflineto{\pgfxy(106.00,85.00)}\pgflineto{\pgfxy(104.50,84.50)}\pgfclosepath\pgfstroke
\pgfmoveto{\pgfxy(115.50,68.50)}\pgflineto{\pgfxy(114.00,68.00)}\pgflineto{\pgfxy(115.50,67.50)}\pgfclosepath\pgffill
\pgfmoveto{\pgfxy(115.50,68.50)}\pgflineto{\pgfxy(114.00,68.00)}\pgflineto{\pgfxy(115.50,67.50)}\pgfclosepath\pgfstroke
\pgfmoveto{\pgfxy(115.50,72.50)}\pgflineto{\pgfxy(114.00,72.00)}\pgflineto{\pgfxy(115.50,71.50)}\pgfclosepath\pgffill
\pgfmoveto{\pgfxy(115.50,72.50)}\pgflineto{\pgfxy(114.00,72.00)}\pgflineto{\pgfxy(115.50,71.50)}\pgfclosepath\pgfstroke
\pgfputat{\pgfxy(150.00,76.00)}{\pgfbox[bottom,left]{\fontsize{11.38}{13.66}\selectfont \makebox[0pt]{$6$}}}
\pgfsetlinewidth{0.15mm}\pgfmoveto{\pgfxy(100.00,80.00)}\pgfcurveto{\pgfxy(100.00,82.76)}{\pgfxy(102.24,85.00)}{\pgfxy(105.00,85.00)}\pgfcurveto{\pgfxy(107.76,85.00)}{\pgfxy(110.00,82.76)}{\pgfxy(110.00,80.00)}\pgfstroke
\pgfmoveto{\pgfxy(110.00,80.00)}\pgfcurveto{\pgfxy(111.40,81.91)}{\pgfxy(113.09,83.60)}{\pgfxy(115.00,85.00)}\pgfcurveto{\pgfxy(126.09,93.10)}{\pgfxy(141.62,90.88)}{\pgfxy(150.00,80.00)}\pgfstroke
\pgfmoveto{\pgfxy(129.50,90.50)}\pgflineto{\pgfxy(131.00,90.00)}\pgflineto{\pgfxy(129.50,89.50)}\pgfclosepath\pgffill
\pgfsetlinewidth{0.30mm}\pgfmoveto{\pgfxy(129.50,90.50)}\pgflineto{\pgfxy(131.00,90.00)}\pgflineto{\pgfxy(129.50,89.50)}\pgfclosepath\pgfstroke
\pgfcircle[fill]{\pgfxy(125.00,75.00)}{1.00mm}
\pgfcircle[stroke]{\pgfxy(125.00,75.00)}{1.00mm}
\pgfputat{\pgfxy(160.00,76.00)}{\pgfbox[bottom,left]{\fontsize{11.38}{13.66}\selectfont \makebox[0pt]{$7$}}}
\pgfsetlinewidth{0.15mm}\pgfmoveto{\pgfxy(160.00,80.00)}\pgfcurveto{\pgfxy(158.60,78.09)}{\pgfxy(156.91,76.40)}{\pgfxy(155.00,75.00)}\pgfcurveto{\pgfxy(143.91,66.90)}{\pgfxy(128.38,69.12)}{\pgfxy(120.00,80.00)}\pgfstroke
\pgfmoveto{\pgfxy(140.50,74.50)}\pgflineto{\pgfxy(139.00,74.00)}\pgflineto{\pgfxy(140.50,73.50)}\pgfclosepath\pgffill
\pgfsetlinewidth{0.30mm}\pgfmoveto{\pgfxy(140.50,74.50)}\pgflineto{\pgfxy(139.00,74.00)}\pgflineto{\pgfxy(140.50,73.50)}\pgfclosepath\pgfstroke
\pgfcircle[fill]{\pgfxy(135.25,75.07)}{1.00mm}
\pgfcircle[stroke]{\pgfxy(135.25,75.07)}{1.00mm}
\end{pgfpicture}%
$$
Since the four pairs $(1<2\le\sigma^*(1)<\sigma^*(2))$, $(6>5>\sigma^*(6)>\sigma^*(5))$, $(7>4>\sigma^*(7)>\sigma^*(4))$, and $(7>5>\sigma^*(7)>\sigma^*(5))$ are only crossings in $\sigma^*$,
$\cros^*(\sigma)=\cros(\sigma^*)=4$.
On the other hand, the three pairs $(2<3\le\sigma^*(3)<\sigma^*(2))$, $(5>4>\sigma^*(4)>\sigma^*(5))$, and $(7>6>\sigma^*(6)>\sigma^*(7))$ are only nestings in $\sigma^*$,
thus $\nest^*(\sigma)=\nest(\sigma^*)=3$.
\end{rmk}

%For instance, $\S_3^*=\set{3012,3021,3102,3120,3201,3210}$.
%For $\sigma \in \S_n^*$, it can be considered as the bijection from $[n]$ to $\set{0,\dots,n-1}$.
\begin{thm}\label{thm:new}
The
two hextuple statistics
$(\RES, \LES, \des, \da^*, \dd^*, \valley^*)$ and
$(\nest^{*}, \linebreak[2] \cros^{*}, \linebreak[2] \defi^{*}-1, \linebreak[2] \linebreak[2] \cda^{*}+\fix^{*}, \cdd^{*}, \cvalley^{*})$  are equidistributed on $\S_n$.
In other words, we have
\begin{align}
A_n(p,q,t,u,v,w) = \sum_{\sigma\in \S_n} p^{\nest^{*} \sigma} q^{\cros^{*} \sigma} t^{\defi^{*}\sigma-1} u^{\cda^{*}\sigma+\fix^{*}\sigma} v^{\cdd^{*}\sigma} w^{\cvalley^{*}\sigma}.\label{eq:q-Euler_Cy}
\end{align}
\end{thm}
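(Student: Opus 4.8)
The plan is to realize the right-hand side of \eqref{eq:q-Euler_Cy} by the very same weighted Motzkin paths that underlie $A_n(p,q,t,u,v,w)$ in the proof of Theorem~\ref{thm:a}. Recall from \eqref{eq:gf}--\eqref{eq:gf2} that the Fran\c{c}on--Viennot encoding of the linear statistics gives
\[
A_n(p,q,t,u,v,w)=\sum_{\gamma\in\M_{n-1}'}t^{\ER\gamma+\NE\gamma}u^{\EB\gamma}v^{\ER\gamma}w^{\NE\gamma}\prod_{i=1}^{n-1}[h_i+1]_{p,q},
\]
$h_i$ being the height of the $i$-th step, equivalently the continued fraction \eqref{eq:A} with weights \eqref{eq:weight}. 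So it is enough to construct a bijection $\Theta\colon\S_n\to\H_{n-1}$ onto Laguerre histories of length $n-1$ carrying the six starred statistics of $\sigma$ into the path data $(\NE\gamma,\ER\gamma,\EB\gamma,\dots)$ above; summing over $\S_n$ then yields \eqref{eq:q-Euler_Cy}.

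I would take for $\Theta$ the Foata--Zeilberger encoding of the \emph{word} $\sigma^{*}=\sigma^{*}(1)\cdots\sigma^{*}(n)$: scan the values $x=1,2,\dots,n-1$ in increasing order and, at step $x$, record a North-East, South-East, East blue, or East red step according as $x$ is a cyclic valley, a cyclic peak, a cyclic double ascent or a fixed point, or a cyclic double descent of $\sigma^{*}$, together with the refinement $p_x=\nest_x(\sigma^{*})$ (the refined nesting number of the proof of Theorem~\ref{thm:b}, now evaluated at $\sigma^{*}$). Three points then have to be verified. (i) \emph{$\Theta$ is a well-defined bijection onto $\H_{n-1}$:} since $\sigma^{*}$ is a bijection $[n]\to\{0,\dots,n-1\}$, its functional digraph on $\{0,1,\dots,n\}$ is one directed path from $n$ (the unique vertex of in-degree $0$) to $0$ (the unique vertex of out-degree $0$), together with cycles on the remaining points of $[n-1]$; every cycle has as many cyclic valleys as cyclic peaks, and so does that path (it starts at the global maximum and ends at the global minimum, so its turning points alternate, beginning with a minimum and ending with a maximum), whence the Motzkin path returns to height $0$, and the usual inverse of Foata--Zeilberger reconstructs $\sigma^{*}$, hence $\sigma$, from the history. (ii) \emph{The $t,u,v,w$-exponents:} by construction $\NE\gamma=\cvalley^{*}\sigma$, $\ER\gamma=\cdd^{*}\sigma$, $\EB\gamma=\cda^{*}\sigma+\fix^{*}\sigma$; moreover the scanned $x$ with $(\sigma^{*})^{-1}(x)>x$ are precisely the cyclic valleys and cyclic double descents of $\sigma^{*}$, while $x=0$ always satisfies $(\sigma^{*})^{-1}(0)>0$, so $\defi^{*}\sigma=1+\cvalley^{*}\sigma+\cdd^{*}\sigma$ and the exponent of $t$ is $\ER\gamma+\NE\gamma=\defi^{*}\sigma-1$. (iii) \emph{The $p,q$-refinement:} at the step of height $h_x$ the parameter $p_x=\nest_x(\sigma^{*})$ ranges over $\{0,1,\dots,h_x\}$ and sums to $\nest^{*}\sigma$, while $h_x-p_x$ sums to $\cros^{*}\sigma$, so that collecting the refinements turns the weighted count of $\H_{n-1}$ into the weighted count of $\M_{n-1}'$ displayed above.

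I expect (iii) together with the precise bijectivity in (i) to be the real work: one has to keep careful track of the two ``loose ends'' $0$ and $n$ of $\sigma^{*}$ (equivalently, the arc arriving at the minimal value $0$ and the arc leaving the position $n$), for it is exactly their presence that promotes the bracket $[h_x]_{p,q}$ familiar from the proof of Theorem~\ref{thm:b} to $[h_x+1]_{p,q}$, and that is responsible for the $-1$ in the exponent of $t$. A tidy way to see the bijectivity is to observe that $\sigma\mapsto(n+1)\,\sigma(1)\,\sigma(2)\cdots\sigma(n)$ identifies $\S_n$ with $\{\pi\in\S_{n+1}:\pi(1)=n+1\}$; applying the Foata--Zeilberger bijection $\Psi_{FZ}$ of the proof of Theorem~\ref{thm:b} to such a $\pi$ forces the first step to be North-East and the last to be South-East, and deleting these two steps (and lowering all heights by $1$) produces $\Theta(\sigma)$. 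One is then left to match the constants linking the cyclic statistics of $\pi$ to the starred statistics of $\sigma$ and to account for the effect on the nesting refinements of the long arc created by the prepended letter $n+1$; this bookkeeping, though a bit delicate, is routine.
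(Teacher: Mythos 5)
Your proposal is correct, but it follows a genuinely different route from the paper. The paper proves Theorem~\ref{thm:new} as a quick corollary of Theorem~\ref{thm3}: it lifts $\sigma$ to $\hat\sigma=(\sigma(1)+1)\cdots(\sigma(n)+1)\,1\in\S_{n+1}$, applies the biword bijection $\Phi$ of Section~\ref{sec:bijection}, deletes the forced first letter $n+1$ of $\tau=\Phi(\hat\sigma)$ to get $\Psi(\sigma)$, and then transfers the six statistics through the already-established identities $(\RESS,\LES,\des)\hat\sigma=(\nest,\cros,\defi)\Phi(\hat\sigma)$ and $(\da-\fmax,\dd,\valley)\hat\sigma=(\cda,\cdd,\cvalley)\Phi(\hat\sigma)$, together with the elementary bookkeeping relating $\sigma$ to $\hat\sigma$ and $\Psi(\sigma)^*$ to $\Phi(\hat\sigma)$. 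You instead bypass $\Phi$ and Theorem~\ref{thm3} entirely and work at the level of Laguerre histories: you encode $\sigma^*$ by a Foata--Zeilberger-type history of length $n-1$ and match its weight with the Fran\c{c}on--Viennot weight \eqref{eq:gf} used in the proof of Theorem~\ref{thm:a}. Your lifting $\sigma\mapsto(n+1)\sigma(1)\cdots\sigma(n)$ is the cyclic-side mirror of the paper's linear-side lifting $\sigma\mapsto\hat\sigma$, and your key claim in (iii) --- that the long arc created by the prepended letter adds exactly one nesting to every weak-excedance value, so that $\nest_x\sigma^*+\cros_x\sigma^*=h_x$ \emph{uniformly} over all four step types, promoting every bracket to $[h_x+1]_{p,q}$ --- does check out (e.g.\ for $\sigma=3762154$ one gets heights summing to $7=\cros^*\sigma+\nest^*\sigma$), as does the identity $\defi^*\sigma-1=\cvalley^*\sigma+\cdd^*\sigma$ coming from the loose end at $0$. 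What your approach buys is a self-contained proof plus, as a by-product, the explicit continued fraction \eqref{eq:A} reinterpreted in terms of the starred cyclic statistics; what the paper's approach buys is brevity and an explicit bijection $\Psi$ on $\S_n$ realizing the equidistribution, at the price of relying on Theorem~\ref{thm3}. The two are reconciled by noting that $\Phi$ is essentially the composition of the Fran\c{c}on--Viennot encoding with the inverse Foata--Zeilberger encoding, so your $\Theta$ composed with $\Psi_{FV}^{-1}$ recovers (a variant of) the paper's $\Psi$.
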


%{\bf This part should be reorganized and rewritten.}
%
%For $\sigma \in \S_{n}$, each value $i\in [n]$ is called a \emph{cyclic adjacency} of $\sigma$ if $\sigma(i) = i-1$. Let $\adj \sigma$ be the number of cyclic adjacencies of $\sigma$,
%\begin{align*}
%\adj\sigma &= \#\set{i\in[n]:\sigma(i) = i-1}.
%\end{align*}
%We give a new continued fraction expansion of the generating function for $\S_n$ containing cyclic adjacency. A proof of this identity yields a new variation of Foata-Zeilberger bijection between $\S_n$ and big Laguerre history.
\begin{proof}
Using the bijection $\Phi$ in Section \ref{sec:bijection} (cf. Theorem 5), we can give a new bijection $\Psi$ on $\S_n$.
Given a permutation $\sigma=\sigma(1)\dots\sigma(n) \in \S_n$, let $\hat{\sigma}=(\sigma(1)+1)\dots(\sigma(n)+1)(1)\in \S_{n+1}$ and
consider the permutation $\tau=\Phi(\hat{\sigma}) \in \S_{n+1}$.
Since the last element $\hat{\sigma}(n+1)$ of $\hat{\sigma}$ is $1$, the first element $\tau(1)$ of $\tau$ should be $n+1$. So $\tau(2)\dots\tau(n+1)$ is a permutation on $[n]$ and we can define the permutation
$\Psi(\sigma):=\tau(2)\dots\tau(n+1) \in \S_{n}.$
By  Theorem{~}\ref{thm3},  the bijection $\Phi$ satisfies $$(\RESS, \LES, \des)\hat{\sigma}=(\nest, \cros, \defi)\Phi(\hat{\sigma}).$$
Since $(\RES, \LES, \des)\sigma = (\RESS+\des-n, \LES, \des-1)\hat{\sigma}$ and
$$(\nest^*, \cros^*, \defi^*-1)\Psi(\sigma)=(\nest+\defi-n, \cros, \defi-1)\Phi(\hat{\sigma}),$$ we have
$(\RES, \LES, \des)\sigma = (\nest^*, \cros^*, \defi^*-1)\Psi(\sigma).$
Hence, assuming $\sigma(0)=\sigma(n+1)=0$,  the two hextuple statistics $(\RES, \LES, \des, \da^*, \dd^*, \valley^*)$ and $(\nest^*, \cros^*, \linebreak[2] \defi^*-1, \cdd^*, $
$\cda^*+\fix^*, \cvalley^*)$ are equidistributed on $\S_n$.
\end{proof}

\begin{ex}
Given $\sigma=4~1~ 2~ 7~ 9~6~5~8~3$, we define $\hat{\sigma}=5~2~3~8~10~7~6~9~4~1$ with right embracing numbers $1,1,1,2,0,1,1,0,0,0$.
Then
$$
{f\choose f'}
= \left(
{1 \atop 4}~
{2 \atop 9}~
{4 \atop 5}~
{6 \atop 7}~
{7 \atop 10}
\right),
\quad
{g\choose g'}
= \left(
{3 \atop 2}~
{5 \atop 3}~
{8 \atop 8}~
{9 \atop 6}~
{10 \atop 1}
\right),
$$

$$
\tau'
= \left( {f \atop f'}~{g \atop g'} \right)
= \left(
{1 \atop 4}~
{2 \atop 9}~
{4 \atop 5}~
{6 \atop 7}~
{7 \atop 10}~
{3 \atop 2}~
{5 \atop 3}~
{8 \atop 8}~
{9 \atop 6}~
{10 \atop 1}
\right)
\to
\left(
{10 \atop 1}~
{3 \atop 2}~
{5 \atop 3}~
{1 \atop 4}~
{4 \atop 5}~
{9 \atop 6}~
{6 \atop 7}~
{8 \atop 8}~
{2 \atop 9}~
{7 \atop 10}
\right),
$$
and thus $\tau=\Phi(\hat{\sigma})= 10~3~5~1~4~9~6~8~2~7$. So, $\Psi(\sigma)=\tau(2)\dots\tau(10)=3~5~1~4~9~6~8~2~7$.
\end{ex}

Let $\S_n^*$ be the set of all sequences $\sigma^*$ where $\sigma \in \S_n$, that is,
$\S_n^* := \set{\sigma^*:\sigma \in \S_n}.$
By definition \eqref{eq:star}, for all $\sigma \in \S_n$,
%there is a trivial mapping
%$$*:\sigma(1)\dots\sigma(n) \mapsto (\sigma(1)-1)\dots(\sigma(n)-1)$$
%between $\S_n$ and $\S_n^*$ such that
it holds that
\begin{align}
(\fix^*, \nest^*, \cros^*, \wex^*, \cdd^*, \cda^*, \cvalley^*) \sigma = (\fix, \nest, \cros, \wex, \cdd, \cda \cvalley)\sigma^*.
\end{align}
Note that, for a given $\sigma^* \in \S_n^*$, the entries $0$ and $n$ are neither a cyclic valley nor a cyclic peak.
We illustrate $\Psi: \S_4\to \S_4$ with their statistics in Figure~\ref{fig:table2}. %of Appendix.

%%%%%%%%%%%%%%%%%%%%%%%%
\begin{figure}[t]
{\small
\begin{tabular}{c|c|c||c|c|c|c|c|c}
  \multirow{3}{*}{$\sigma \in \S_4$} &
  \multirow{3}{*}{$\tau=\Psi(\sigma)$ } &
  \multirow{3}{*}{$\tau^*\in\S_4^*$} &
  $\des\sigma$ & $\les\sigma$ & $\res\sigma$ & $\da^*\sigma$ & $\dd^*\sigma$ & $\valley^*\sigma$
  \\ \cline{4-9}
  & & &
  $\defi^{*} \tau -1$ & $\cros^{*}\tau$& $\nest^{*} \tau$ & $\cda^{*}\tau + \fix^{*}\tau$ & $\cdd^{*}\tau$ & $\cvalley^{*} \tau$
  \\ \cline{4-9}
  & & &
  $\defi \tau^{*}-1$ & $\cros \tau^{*}$& $\nest \tau^{*}$ & $\cda \tau^{*} + \fix \tau^{*}$ & $\cdd \tau^{*}$ & $\cvalley \tau^{*}$
  \\
  \hline
1234 & 2341 & 1230 & 0 & 0 & 0 & 3 & 0 & 0  \\
1243 & 2314 & 1203 & 1 & 0 & 0 & 2 & 1 & 0  \\
1324 & 2431 & 1320 & 1 & 0 & 1 & 1 & 0 & 1  \\
1342 & 2143 & 1032 & 1 & 0 & 0 & 2 & 1 & 0 \\
1423 & 2413 & 1302 & 1 & 1 & 0 & 1 & 0 & 1 \\
1432 & 2134 & 1023 & 2 & 0 & 0 & 1 & 2 & 0 \\
2134 & 3241 & 2130 & 1 & 0 & 1 & 1 & 0 & 1 \\
2143 & 3214 & 2103 & 2 & 0 & 1 & 0 & 1 & 1 \\
2314 & 4321 & 3210 & 1 & 0 & 2 & 1 & 0 & 1 \\
2341 & 1342 & 0231 & 1 & 0 & 0 & 2 & 1 & 0 \\
2413 & 4312 & 3201 & 1 & 1 & 1 & 1 & 0 & 1 \\
2431 & 1324 & 0213 & 2 & 0 & 0 & 1 & 2 & 0 \\
3124 & 3421 & 2310 & 1 & 1 & 1 & 1 & 0 & 1 \\
3142 & 3124 & 2013 & 2 & 1 & 1 & 0 & 1 & 1 \\
3214 & 4231 & 3120 & 2 & 0 & 2 & 0 & 1 & 1 \\
3241 & 1432 & 0321 & 2 & 0 & 1 & 0 & 1 & 1 \\
3412 & 3142 & 2031 & 1 & 1 & 0 & 1 & 0 & 1 \\
3421 & 1243 & 0132 & 2 & 0 & 0 & 1 & 2 & 1 \\
4123 & 3412 & 2301 & 1 & 2 & 0 & 1 & 0 & 1 \\
4132 & 4123 & 3012 & 2 & 2 & 0 & 0 & 1 & 1 \\
4213 & 4213 & 3102 & 2 & 1 & 1 & 0 & 1 & 1 \\
4231 & 1423 & 0312 & 2 & 1 & 0 & 0 & 1 & 1 \\
4312 & 3124 & 2013 & 2 & 1 & 0 & 0 & 1 & 1 \\
4321 & 1234 & 0123 & 3 & 0 & 0 & 0 & 3 & 0 \\
\end{tabular}
}
\caption{Illustration of $\Psi$ on $\S_4$ with their statistics}
\label{fig:table2}
\end{figure}

Given a $\sigma \in \S_n$, the diagram of $\sigma^*\in\S_n^*$ consists of cycles $i\rightarrow \sigma^*(i)\rightarrow \cdots \rightarrow i$ with $i\in [n]$, and the path $n\rightarrow \sigma^*(n)\rightarrow \cdots \rightarrow 0$.
%we can  embed  $\sigma^*\in\S_n^*$ into  $\S_{n+1}$ by assuming that $\sigma^*(0)=n$.
Let  $\cyc^*\sigma$ be the number of cycles in the diagram of $\sigma^{*}$.
For example, for the previous example $\sigma=3762154$ and $\sigma^*=2651043$, we have $\cyc^*(\sigma)=\cyc(\sigma^*)=2$, since one cycle $1\to 2\to 6 \to 4$ and one path $7 \to 3 \to 5 \to 0$ exist in $\sigma^*$.
Let  $\S_{n}(k)$ be the set of permutations $\sigma\in \S_{n}$ with $\cvalley^* \sigma=k$ and $\cdd^* \sigma=0$.
Introduce the polynomial
\begin{align}\label{def:c}
d_{n,k}(\beta)=\sum_{\sigma \in \S_{n}(k)} \beta^{\cyc^* \sigma-\fix^*\sigma}.
\end{align}

\begin{thm} We have
\begin{align}\label{eq:c}
\sum_{\sigma\in \S_{n}} \beta^{\cyc^* \sigma-\fix^* \sigma} t^{\exc \sigma}=\sum_{k=0}^{\floor{(n-1)/2}} d_{n,k}(\beta)t^{k}(1+t)^{n-1-2k}.
\end{align}
Moreover,  for all $k\ge1$, the polynomial $d_{n,k}(\beta)$ has a factor $(\beta+1)$.
\end{thm}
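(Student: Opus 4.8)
The plan is to imitate the proof of Theorem~\ref{thm: dcycle}. First I would introduce the refinement
$$D_n(\beta,t,u,v,w):=\sum_{\sigma\in\S_n}\beta^{\cyc^*\sigma-\fix^*\sigma}\,t^{\exc\sigma}\,u^{\cda^*\sigma+\fix^*\sigma}\,v^{\cdd^*\sigma}\,w^{\cvalley^*\sigma}=\sum_{k=0}^{\floor{(n-1)/2}}d_{n,k}(\beta,t,u,v)\,w^k.$$
The lumped exponent $\cda^*\sigma+\fix^*\sigma$ is the natural bookkeeping: under a Foata--Zeilberger encoding the cyclic double ascents and the fixed points of $\sigma^*$ both become East blue steps, and neither contributes to $\cyc^*\sigma-\fix^*\sigma$. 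Setting $u=1$ and $v=0$ (the latter forcing $\cdd^*=0$) will, in view of \eqref{def:c}, pick out $d_{n,k}(\beta)$ from $D_n$.

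The technical heart is a continued fraction for $D_n$, which I would obtain by applying a Foata--Zeilberger-type bijection to $\sigma^*$, read through its diagram: the cycles carried by $[n-1]$ together with the distinguished path $n\to\cdots\to0$. The path is the new feature with respect to \eqref{eq:D}: it is always present, is never a fixed point, and its two ends $n$ and $0$ are by definition neither cyclic valleys nor cyclic peaks; it behaves like a permanently open excedance arc together with one extra cycle. Accordingly the continued fraction should read
$$\sum_{n\ge1}D_n(\beta,t,u,v,w)\,x^n=
\cfrac{\beta x}
{1-(tu+v)x-\cfrac{(\beta+1)tw\,x^2}
{1-2(tu+v)x-\cfrac{2(\beta+2)tw\,x^2}
{1-3(tu+v)x-\cfrac{3(\beta+3)tw\,x^2}{\cdots}}}}$$
that is, $\beta x$ times the $J$-fraction of \eqref{eq:fla} with $b_h=(h+1)(tu+v)$ and $a_hc_{h+1}=(h+1)(\beta+h+1)tw$; morally this is \eqref{eq:D} with the height shifted up by one in the linear coefficients and $\beta$ shifted to $\beta+1$ in the quadratic ones. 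The main obstacle will be carrying out the bijection with the precise weights of the four kinds of steps, in particular the contribution of the two path ends; the formula can be checked against $D_1=\beta$, $D_2=\beta(tu+v)$ and $D_3=\beta(tu+v)^2+\beta(\beta+1)tw$.

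Granting the continued fraction, the rest runs exactly in parallel with Sections~\ref{sec:FV} and \ref{sec: dcycle}. Substituting $x\leftarrow x/(tu+v)$ and $w\leftarrow w(tu+v)^2/t$ turns the right-hand side into $\beta x/(tu+v)$ times a continued fraction free of $t,u,v$ (the terms $b_hx$ become $(h+1)x$ and $a_hc_{h+1}x^2$ become $(h+1)(\beta+h+1)wx^2$); hence $d_{n,k}^{*}(\beta):=d_{n,k}(\beta,t,u,v)/\bigl(t^{k}(tu+v)^{n-1-2k}\bigr)$ is a polynomial in $\beta$ not depending on $t,u,v$. Putting $u=1$, $v=0$ and comparing with \eqref{def:c} identifies $d_{n,k}^{*}(\beta)=d_{n,k}(\beta)$ (and, as a byproduct, forces $\exc\sigma=n-1-k$ for every $\sigma\in\S_n(k)$), and putting $u=v=w=1$ yields the expansion \eqref{eq:c}. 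Finally, for the divisibility: in the continued fraction above every monomial in $w$ of positive degree comes from a weighted Motzkin path having at least one North-East step, the first such step necessarily occurs at height $0$, and the weight of any Motzkin path with a North-East step at height $0$ is divisible by $a_0c_1=(\beta+1)tw$; therefore $(\beta+1)$ divides $d_{n,k}(\beta,t,u,v)$, and hence divides $d_{n,k}(\beta)=d_{n,k}^{*}(\beta)$, for every $k\ge1$.
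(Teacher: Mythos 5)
Your proposal follows the paper's proof essentially verbatim: the same five-variable refinement $D_n(\beta,t,u,v,w)$, the same Zeng-type continued fraction (asserted rather than fully derived in both cases, the paper citing \cite{Zen93}), the same rescaling $x\leftarrow x/(tu+v)$, $w\leftarrow w(tu+v)^2/t$ to show that $d_{n,k}(\beta,t,u,v)/\bigl(t^k(tu+v)^{n-1-2k}\bigr)$ is independent of $t,u,v$, and the same reading of the $(\beta+1)w$ numerator at level zero for the divisibility. The only discrepancy is your prefactor $\beta x$ versus the paper's $x$ in the continued fraction, which traces back to whether the path $n\to\cdots\to 0$ is counted in $\cyc^*$ --- a point on which the paper itself is inconsistent (its worked example counts it, its tables of $d_{n,k}(\beta)$ do not) --- and since this amounts to a uniform factor of $\beta$ on both sides of \eqref{eq:c}, the theorem is unaffected.
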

\begin{proof}
Let
\begin{align}\label{eq:defD}
D_{n}(\beta,t,u,v,w):=\sum_{\sigma\in \S_{n}} \beta^{\cyc^* \sigma - \fix^*\sigma} t^{\wex^* \sigma} u^{\cda^* \sigma + \fix^* \sigma} v^{\cdd^* \sigma}
w^{\cvalley^* \sigma}.
\end{align}
By  the same method in \cite{Zen93} to count the cycles, we obtain
\begin{multline} \label{eq:C}
\sum_{n\ge1} D_n(\beta,t,u,v,w)x^{n-1}=\\
\dfrac{1}
{1-1(tu+v)x-\dfrac{1(\beta+1)twx^2}
{1-2(tu+v)x-\dfrac{2(\beta+2)twx^2}
{1-3(tu+v)x-\dfrac{3(\beta+3)twx^2}
{\cdots}}}}.
\end{multline}
Define the polynomial $d_{n,k}(\beta,t,u,v)$ to be  the coefficients of $w^k$ in $D_{n}(\beta,t,u,v,w)$:
$$
D_{n}(\beta,t,u,v,w)=\sum_{k=0}^{\floor{(n-1)/2}} d_{n,k}(\beta,t,u,v) w^k.
$$
Substituting $x \leftarrow \frac{x}{(tu+v)}$ and $w \leftarrow \frac{w(tu+v)^2}{t}$ in \eqref{eq:C},  we get
\begin{align}\label{eq:dcf}
\sum_{n\ge 1} \sum_{k=0}^{\floor{(n-1)/2}} \frac{d_{n,k}(\beta,t,u,v)}{t^k(tu+v)^{n-1-2k}} w^k x^{n-1}=
\dfrac{1}
{1-x-\dfrac{(\beta+1)wx^2}
{1-2x-\dfrac{2(\beta+2)wx^2}
{1-3x-\dfrac{3(\beta+3)wx^2}
{\cdots}}}}.
\end{align}
So $d_{n,k}^*(\beta):=\frac{d_{n,k}(\beta,t,u,v)}{t^k(tu+v)^{n-1-2k}}$ must be a polynomial   in $\beta$, since the right-hand side of \eqref{eq:dcf} is independent of $u$, $v$ and $t$. Hence we have
\begin{align}\label{eq:starD}
D_{n}(\beta,t,u,v,w)
=\sum_{k=0}^{\floor{(n-1)/2}} d_{n,k}^*(\beta) t^k(tu+v)^{n-1-2k} w^k.
\end{align}
To verify \eqref{def:c}, taking $t=u=1$ and $v=0$ in \eqref{eq:defD} and \eqref{eq:starD}  we obtain
$$
D_n(\beta,1,1,0,w)=\sum_{k=0}^{\floor{(n-1)/2}} d_{n,k}(\beta) w^k=\sum_{k=0}^{\floor{(n-1)/2}} d_{n,k}^*(\beta) w^k.
$$
Thus $d_{n,k}(\beta)=d_{n,k}^*(\beta)$,
 and we get \eqref{eq:c} by putting $u=v=w=1$ in \eqref{eq:starD} .
Clearly,  for all $k\ge1$,  the polynomial $d_{n,k}(\beta)$ has a factor $(\beta+1)$ because of the presence of
the term  $(\beta+1)w$ at the second row of the continued fraction on the right side of \eqref{eq:dcf}.
\end{proof}
%%%%%%%%%%%%%%%%%%%%%%%%%%%%%
\section{Concluding remark}\label{sec:conclusion}
Consider the descent polynomial of involutions on $[n]$:
\begin{align}
I_{n}(t):=\sum_{\sigma \in \I_n} t^{\des\sigma}=I(n,0)+I(n,1)t+\cdots + I(n, n-1)t^{n-1},
\end{align}
where $\I_n$ is  the subset of involutions in  $\S_{n}$.
The sequence $\{I(n,k)\}_{0\leq k\leq n-1}$
  is known  \cite{GZ06} to be \emph{symmetric} and \emph{unimodal}.
F. Brenti had conjectured that the sequence $\{I(n,k)\}_{0\leq k\leq n-1}$ is \emph{log-concave}, which was later  disproved by Barnabei et al.\cite{BFS09}. Hence the best result one could expect  for $I_n(t)$ is the following expansion
\begin{align}
I_{n}(t)=\sum_{k=0}^{\lfloor (n-1)/2\rfloor}\alpha_{n,k}t^{k}(1+t)^{n-1-2k} \qquad \textrm{with}\quad \alpha_{n,k}\in \N.
\end{align}
This is Conjecture~4.1   in \cite{GZ06}.
Unfortunately, the generating function of $I_{n}(t)$  does not have  a nice continued fraction expansion as that for $A_n(t)$ or $B_n(t)$.
%Both the polynomials $A_{n}(t)$ and $B_{n}(t)$ have only real zeros. So their corresponding coefficient sequences are log-concave.
%An interesting problem is to see whether there is a  polynomial $C_{n}(x)$  such that  the corresponding  coefficient sequence is not log-concave (hence the polynomial has non real zeros), but $C_{n}(x)$ has still   an expansion formula as \eqref{eq:peaka} or \eqref{eq:peakb}. One candidate  for such a polynomial was conjectured in \cite{GZ06}.

\section*{Acknowledgement}
This work was  supported by the French National Research Agency  under
 the grant ANR-08-BLAN-0243-03 and the program MIRA Recherche 2008 (project 08 034147 01) de
la  R\'egion Rh\^one-Alpes.

%%%%%%%%%%%%%%%%%%%%%%%%%%%%

\section*{Appendix}
The first values of $a_{n,k}(p,q)$ are given by $a_{n,0}(p,q)=1$ for $1\leq n\leq 5$ and
$${\small
\begin{tabular}{c|ccc}
  n $\setminus$ k& 0& 1 & 2  \\
  \hline
  1 & 1 &       \\
  2 & 1 &       \\
  3 & 1 & $p+q$ &    \\
  4 & 1 & $(p+q)(p+q+2)$ \\
  5 & 1 & $(p+q)[(p+q)^2+2(p+q)+3]$ & $(p+q)^2(p^2+pq+q^2+1)$ \\
\end{tabular}
}$$

For $j=0,1,2$ the first values of the polynomials $b_{n,k,j}(p,q)$ are given as follows.
%%%%%%%%%%%%%%%%%%%%%%%%%%%%%%%%%%%%%%%%%%%%%%%%%%%%%%
$${\small
\begin{tabular}{r|rccc}
j=0& k=0 & 1 & 2 & 3 \\
 \hline
n= 1 & 0 &        \\
 2 & 0 & 1 &    \\
 3 & 0 & 1 &    \\
 4 & 0 & 1 & $(p+q)^2+1$ \\
 5 & 0 & 1 & $(p+q)^3+2(p+q)^2+2$ \\
 6 & 0 & 1 & $(p+q)^4+2(p+q)^3+3(p+q)^2+3$ & $b_{6,3,0}(p,q)$\\
\end{tabular}
}$$
%where ${\small b_{6,3,0}(p,q)=(p+q)^6+(1-2pq)(p+q)^4+(2+p^2q^2)(p+q)^2+1}$.
$${\small
\begin{tabular}{r|rcc}
j=1& k=0 & 1 & 2  \\
 \hline
 n=1 & 1 &        \\
 2 & 0 &        \\
 3 & 0 & $p+2$ &    \\
 4 & 0 & $2p+2$ &   \\
 5 & 0 & $3p+2$ & $b_{5,2,1}(p,q)$ \\
\end{tabular}
\hspace{2cm}
\begin{tabular}{r|rcc}
j=2& k=0 & 1 & 2 \\
 \hline
n= 2 & 1 &     \\
 3 & 0 &     \\
 4 & 0 & $p^2+2p+3$ \\
 5 & 0 & $3p^2+4p+3$  \\
 6 & 0 & $6p^2+6p+3$ & $b_{6,2,2}(p,q)$ \\
\end{tabular}
}$$
where
\begin{align*}{\scriptsize
b_{6,3,0}(p,q)&=(p+q)^6+(1-2pq)(p+q)^4+(2+p^2q^2)(p+q)^2+1,\\
b_{5,2,1}(p,q)&=(p^2+2p+2)q^2 + (2p^3+4p^2+4p)q + (p^4+2p^3+2p^2+2p+3),\\
b_{6,2,2}(p,q)&=(p^4+2p^3+5p^2+4p+3)q^2 + (2p^5+4p^4+10p^3+8p^2+6p)q \\
&\qquad + (p^6+2p^5+5p^4+4p^3+6p^2+6p+6).
}\end{align*}
%%%%%%%%%%%%%%%%%%%%%%%%%%%%%%%%%%%%%%%%%%%%%%%%%%%%%%%

The first non-zero values of $c_{n,k}(\beta)$ ($2\leq n\leq 7$) are given by the following table.
$${\small
\begin{tabular}{c|cccc}
  n $\setminus$ k& 1 & 2 & 3 \\
  \hline
  2  & $\beta$ &    \\
  3  & $\beta$ &    \\
  4  & $\beta$ & $\beta(3\beta+2)$ \\
  5  & $\beta$ & $2\beta(5\beta+4)$ \\
  6  & $\beta$ & $\beta(25\beta+22)$ & $ \beta(15\beta^2 + 30\beta + 16)$\\
  7  & $\beta$ & $4\beta(14\beta+13)$ & $ \beta(105\beta^2 + 238\beta + 136)$\\
\end{tabular}
}$$

The first values of $d_{n,k}(\beta)$ are given by $d_{n,0}(\beta)=1$ for $1\leq n\leq 7$ and
$${\small
\begin{tabular}{c|cccc}
  n $\setminus$ k& 0& 1 & 2 & 3 \\
  \hline
  1 & 1 &       \\
  2 & 1 &       \\
  3 & 1 & $\beta+1$ &    \\
  4 & 1 & $4(\beta+1)$ \\
  5 & 1 & $11(\beta+1)$ & $(\beta+1)(3\beta+5)$ \\
  6 & 1 & $26(\beta+1)$ & $(\beta+1)(25\beta+43)$ \\
  7 & 1 & $57(\beta+1)$ & $10(\beta+1)(13\beta+23)$ & $(\beta+1)(15\beta^2+60\beta+61)$\\
\end{tabular}.
}$$

%%%%%%%%%%%%%%%%%%%%%%%%%
%\begin{figure}[t]
%{\small
%\begin{tabular}{c|c|c||c|c|c|c|c|c}
%  \multirow{3}{*}{$\sigma \in \S_3$} &
%  \multirow{3}{*}{$\tau \in \S_3$} &
%  \multirow{3}{*}{$\tau^*\in\S_3^*$} &
%  $\des\sigma$ & $\les\sigma$ & $\res\sigma$ & $\da^*\sigma$ & $\dd^*\sigma$ & $\valley^*\sigma$
%  \\ \cline{4-9}
%  & & &
%  $\defi^{*} \tau -1$ & $\cros^{*}\tau$& $\nest^{*} \tau$ & $\cda^{*}\tau + \fix^{*}\tau$ & $\cdd^{*}\tau$ & $\cvalley^{*} \tau$
%  \\ \cline{4-9}
%  & & &
%  $\defi \tau^{*}-1$ & $\cros \tau^{*}$& $\nest \tau^{*}$ & $\cda \tau^{*} + \fix \tau^{*}$ & $\cdd \tau^{*}$ & $\cvalley \tau^{*}$
%  \\
%  \hline
%  123 & 231 & 120 & 0 & 0 & 0 & 2 & 0 & 0 \\
%  132 & 213 & 102 & 1 & 0 & 0 & 1 & 1 & 0 \\
%  213 & 321 & 210 & 1 & 0 & 1 & 0 & 0 & 0 \\
%  231 & 132 & 021 & 1 & 0 & 0 & 1 & 1 & 0 \\
%  312 & 312 & 201 & 1 & 1 & 0 & 0 & 0 & 1 \\
%  321 & 123 & 012 & 2 & 0 & 0 & 0 & 2 & 0 \\
%\end{tabular}
%}
%\caption{Illustration of $\Psi$ on $\S_3$ with their statistics}
%\label{fig:table}
%\end{figure}
%
%%%%%%%%

\providecommand{\bysame}{\leavevmode\hbox to3em{\hrulefill}\thinspace}
\providecommand{\href}[2]{#2}

%\nocite{CCDG10, SZ10a}
%\bibliographystyle{amsabbrv}
%\bibliography{SZ10c}

\begin{thebibliography}{CCDG09}
\bibitem[BFS09]{BFS09}
M. Barnabei, F. Bonetti,  M. Silimbani,
The descent statistic on involutions is not log-concave,  European J. Combin.  30  (2009),  no. 1, 11--16.

\bibitem[Bia93]{Bia93}
P.~Biane, Permutations suivant le type d'exc\'edance et le nombre
  d'inversions et interpr\'etation combinatoire d'une fraction continue de
  {H}eine, European J. Combin. \textbf{14} (1993), no.~4, 277--284.

\bibitem[Br{\"a}06]{Bra06}
P.~Br{\"a}nd{\'e}n, Sign-graded posets, unimodality of {$W$}-polynomials
  and the {C}harney-{D}avis conjecture, Electron. J. Combin. \textbf{11}
  (2004/06), no.~2, Research Paper 9, 15 pp. (electronic).

\bibitem[Br{\"a}08]{Bra08}
\bysame, Actions on permutations and unimodality of descent polynomials,
  European J. Combin. \textbf{29} (2008), no.~2, 514--531.

\bibitem[Bre90]{Bre90}
F.~Brenti, Unimodal polynomials arising from symmetric functions, Proc.
  Amer. Math. Soc. \textbf{108} (1990), no.~4, 1133--1141.


\bibitem[CSZ97]{CSZ97}
R.~J. Clarke, E.~Steingr{\'{\i}}msson, and J.~Zeng, New
  {E}uler-{M}ahonian statistics on permutations and words, Adv. in Appl. Math.
  \textbf{18} (1997), no.~3, 237--270.
  
  \bibitem[Cor07]{Cor07}
S.~Corteel, Crossings and alignments of permutations, Adv. in Appl.
  Math. \textbf{38} (2007), no.~2, 149--163.


\bibitem[Fla80]{Fla80}
P.~Flajolet, Combinatorial aspects of continued fractions,
Discrete Math. 32 (1980), no. 2, 125Ð161.

\bibitem[FS70]{FS70}
D.~Foata and M.-P. Sch{\"u}tzenberger, \emph{Th\'eorie g\'eom\'etrique des
  polyn\^omes eul\'eriens}, Lecture Notes in Mathematics, Vol. 138,
  Springer-Verlag, Berlin, 1970.

\bibitem[FS76]{FS76}
D.~Foata and V.~Strehl, Euler numbers and variations of permutations,
  Colloquio {I}nternazionale sulle {T}eorie {C}ombinatorie ({R}oma, 1973),
  {T}omo {I}, Accad. Naz. Lincei, Rome, 1976, pp.~119--131. Atti dei Convegni
  Lincei, No. 17.
  
  \bibitem[FZ90]{FZ90}
D.~Foata and D.~Zeilberger, Denert's permutation statistic is indeed
  {E}uler-{M}ahonian, Stud. Appl. Math. \textbf{83} (1990), no.~1, 31--59.


\bibitem[FV79]{FV79}
J.~Fran{\c{c}}on and G.~Viennot, Permutations selon leurs pics, creux,
  doubles mont\'ees et double descentes, nombres d'{E}uler et nombres de
  {G}enocchi, Discrete Math. \textbf{28} (1979), no.~1, 21--35.


\bibitem[GZ06]{GZ06}
V.~J.~W. Guo and J.~Zeng, The {E}ulerian distribution on involutions is
  indeed unimodal, J. Combin. Theory Ser. A \textbf{113} (2006), no.~6,
  1061--1071.

\bibitem[JV10]{JV10}
M.~Josuat-Verg\'es, A $q$-enumeration of alternating permutations,
  European J. Combin. \textbf{31} (2010), no.~7, 1892--1906.

\bibitem[KZ01]{KZ01}  D. S. Kim,  J. Zeng,  A new decomposition of derangements. J. Combin. Theory Ser. A 96 (2001), no. 1, 192--198.

\bibitem[SZ10]{SZ10}
H.~Shin and J.~Zeng, The $q$-tangent and $q$-secant numbers via continued
  fractions, European J. Combin. \textbf{31} (2010), no.~7, 1689--1705.

\bibitem[Zen93]{Zen93}
J.~Zeng, \'{E}num\'erations de permutations et {$J$}-fractions
  continues, European J. Combin. \textbf{14} (1993), no.~4, 373--382.

 \bibitem[Zha95]{Zha95}
  X. ~Zhang,
  On a kind of sequence of polynomials, in:
  Computing and Combinatorics (Xi'an, 1995), 379--383, Lecture Notes in Computer Science, vol. 959, Springer, Berlin, 1995.
\end{thebibliography}

% ----------------------------------------------------------------
\end{document}